\newtheorem{theorem}{\bf Theorem}
\newtheorem{lemma}[theorem]{\bf Lemma}
\newtheorem{remark}[theorem]{Remark}
\newenvironment{proof}{\noindent{\bf Proof}}
\newcommand{\bzero}{\mathbf{0}}
\newcommand{\N}{\mathbb{N}}
\newcommand{\R}{\mathbb{R}}
\newcommand{\cB}{\mathcal{B}}
\newcommand{\bx}{\mbox{\boldmath{$x$}}}
\newcommand{\balpha}{\mbox{\boldmath{$\alpha$}}}
\begin{document}
\title{Approximation of noisy data using
multivariate splines and finite element methods}

\author{Elizabeth Harris \thanks{School of Mathematical \& Physical Sciences,
Mathematics Building,
University of Newcastle,
University Drive,
Callaghan, NSW 2308, Australia, {\tt E.Harris@uon.edu.au}},
\; 
Bishnu P.~Lamichhane \thanks{School of Mathematical \& Physical Sciences,
Mathematics Building,
University of Newcastle,
University Drive,
Callaghan, NSW 2308, Australia, {\tt Bishnu.Lamichhane@newcastle.edu.au}}
\; and 
Quoc Thong Le Gia \thanks{School of Mathematics and Statistics,
University of New South Wales,
Sydney NSW 2052, Australia, {\tt qlegia@unsw.edu.au}}}

\maketitle

\begin{abstract}
We compare a recently proposed multivariate spline based on mixed partial derivatives with two other 
standard splines for the scattered data smoothing problem.
The splines are defined as the minimiser of a penalised least squares functional. The penalties are based on partial differentiation operators, and are integrated using the finite element method. We compare three methods to two problems: to remove the mixture of Gaussian and impulsive noise from an image, and to recover a continuous function from a set of noisy observations.
\end{abstract}

\noindent\textit{Keywords: Sobolev space, scattered data interpolation, Gaussian noise,   impulsive noise}

 \section{Introduction}
We begin by outlining the scattered data problem. Consider the set of scattered points $\mathcal{G} = \left\lbrace \mathbf{p}_i \right\rbrace _{i=1}^N$ in a domain $\Omega \subseteq \R^d$ with $d \in \N$, 
 and the set of noisy observations at those points $\left\lbrace z_i \right\rbrace _{i=1}^N$. We want to reconstruct an unknown function $u$ to approximate the given data. Assuming that the underlying data set is corrupted with Gaussian noise, we can assume that the unknown function $u$ satisfies 
\begin{align*}
z_i = u(\mathbf{p}_i) + n_i, 
\end{align*}
$i = 1, \dots N$, where $\left\lbrace n_i \right\rbrace_{i=1}^N$ is a set of normally distributed random variables with mean $0$ and variance $\sigma^2$.

To recover the unknown function $u$, we will use an approach based on the multivariate L-spline. That is, we will search for a function $u$ that minimises the following least squares functional
\begin{align*}
\sum_{i=1}^N (z_i - u(\mathbf{p}_i))^2 + \lambda \int_\Omega (Lu(x,y))^2 \mathop{d\mathbf{x}}
\end{align*}
over a Sobolev space $V$, where $L$ is a partial differentiation operator, and $\lambda$ is a positive smoothing parameter. 

We use  the standard notation for Sobolev spaces on $\Omega$ \cite{Ada75,BS94,Cia78}. 
We consider three different choices for $L$. The first choice is to take $Lu$ as the gradient of $u$. Then we need to have $V = H^1(\Omega)$. However, the continuous problem is not well-posed with this choice for $d\geq 2$, because the point value of a function is not defined in $H^1(\Omega)$ when $d \geq 2$. The second choice is to choose $Lu$ as the Laplacian of $u$. Then we need to have $V = H^2(\Omega)$. Again, the continuous problem is not well-posed for $d>3$, because the point value of a function is not defined in $H^2(\Omega)$ when $d > 3$. The third choice is to include mixed partial derivatives of $u$ 
on the gradient penalty to construct $Lu$ \cite{LRH14}. Unlike the other two choices, the resulting spline is well defined for any dimension $d \in \N$. 

This is the first time that computational results of the newly proposed multivariate spline \cite{LRH14}  are 
presented and compared with other existing techniques. Moreover, we observe the instability of the 
gradient penalty approach in our numerical experiments, which is another novelty of this contribution.

We will apply these methods to two problems. The first problem is to recover an image that have been corrupted with both Gaussian and impulsive noise. We apply a finite element method to compute the solution of the above minimisation problem.  Finite element methods have recently become popular in 
different areas of image processing \cite{PR00,FNM01,Bes04,WQZ06,DI06}. 
Finite element methods are applied in 
\cite{Lam09} and \cite{Lam14} to remove the mixture of Gaussian and impulsive noise using the gradient penalty 
and total variation penalty, respectively. 

The second problem is to recover a continuous function from a set of noisy observations. We consider observations that have been corrupted with Gaussian noise. In this example, we see spurious spikes in the solution using 
the gradient penalty. This is due to the fact that the gradient penalty does not control the point-wise values of
the function. Numerical results show that we can increase the mesh-size to reduce the height of the spikes but they cannot be  totally removed. 

This paper is organised as follows. In the next section, we present  the gradient penalty smoothing technique. In the third section, we present the smoothing technique based on the minimisation of a functional involving mixed partial derivatives. 
In the fourth section, we compare the three finite element methods in denoising images and recovering continuous functions. We discuss these results in the last section.

\section{Multivariate Spline with Gradient Penalty}
%\color{red} Write introducing stuff here; make reference to the paper
%\color{black} 

%(NOTES FOR ME)
%In GarckeHegland:
%\begin{enumerate}
%\item $N$ is $mn$
%\item $m$ is $N$
%\item $V_N$ is $V_h$
%\item $\underline{x_i}$ is $(\mathbf{p}_i)$
%\item $y_i$ is $z_i$
%\item $\left\langle \cdot, \cdot \right\rangle$,  $\norm{\cdot}$ are the $L^2$ inner product and norm respectively
%\item $\left\langle u,v \right\rangle_{RLS} = \frac{1}{m} \sum_{i=1}^m u(\mathbf{p}_i)v(\mathbf{p}_i) + \lambda \left\langle \nabla u, \nabla v \right\rangle_{L^2(\Omega)}$ we replace with $\left\langle u,v \right\rangle _{a} = \sum_{i=1}^N u(\mathbf{p}_i)v(\mathbf{p}_i) + \lambda \left\langle \nabla u, \nabla v \right\rangle_{L^2(\Omega)}$
%\item $\norm{u}_{RLS}^2 = \frac{1}{m} \sum_{i=1}^m u(\mathbf{p}_i)^2 + \lambda \int_{\Omega} \norm{\nabla u}^2 \mathop{dx} \mathop{dy}$ we replace with our energy norm $\norm{u}_a^2 = \sum_{i=1}^N u(\mathbf{p}_i)^2 + \lambda \int_{\Omega} \norm{\nabla u}^2 \mathop{dx} \mathop{dy}$
%\end{enumerate}

The  multivariate spline with the gradient penalty is the following minimisation problem 
 \begin{align}
\min_{u \in V} \left( \sum_{i=1}^N (u(\mathbf{p}_i)-z_i)^2 + \lambda\int_{\Omega} \norm{\nabla u}^2 \mathop{dx}\mathop{dy} \right).
\end{align}
 Due to the choice of the minimisation functional it is natural to take $V = H^1(\Omega)$, for which the 
problem will not be well-posed when $d>1$.  

Now we consider a finite element
discretisation of the spline.  Let $C^0(\Omega)$ be the space of continuous functions in $\Omega$ and 
 ${\cal T}_h$ a finite element  triangulation of $\Omega$.
Note that ${\cal T}_h$ is the set of triangles or rectangles. 
Then let
\begin{equation}\label{fespace}
V_h=\{u_h\in C^0(\Omega)|\, u_h{|_{T}} \in P(T),\, T \in {\cal T}\}
\end{equation}
be a finite element space, where $P(T)$ is the linear polynomial space 
if $T$ is a triangle, and $P(T)$ is the bilinear polynomial  
space on $T$ if $T$ is a rectangle \cite{QV94}. 
The minimisation problem leads to the variational problem of finding 
$u_h \in V_h$ such that 
\[ a(u_h,v_h) = \ell(v_h),\quad v_h \in V_h,\]
where the bilinear form $a(\cdot,\cdot)$ and the linear form $\ell(\cdot)$ are given by
\begin{align*}
a(u,v) &= \sum_{i=1}^N u(\mathbf{p}_i) v(\mathbf{p}_i) + \lambda \int_{\Omega} \nabla u \cdot \nabla v \mathop{d\mathbf{x}},\\
\ell(v) &= \sum_{i=1}^N v(\mathbf{p}_i)z_i.
\end{align*}
It is easy to show that the above problem has a unique solution under the assumption that 
 the set of scattered points $\mathcal{G}$ is non-empty \cite{Lam09}.

%In the following lemma, we show that the bilinear form $a(\cdot,\cdot)$ is positive definite on $V_h$.
%
%\begin{lemma}\label{lem1}
%Let $\lambda > 0$ and let the set of scattered points $\mathcal{G}$ be non-empty. Then the bilinear form $a(\cdot,\cdot)$ is positive definite on $V_h$.
%\end{lemma}
%\begin{proof}
%If $u_h = 0$, then clearly $a(u_h,u_h) = 0$. Conversely, let $a(u_h,u_h) = 0$. Then
%\begin{align*}
%Pu_h = 0, \text{ and } \nabla u_h = 0.
%\end{align*}
%Since $u_h$ is a continuous function, $\nabla u_h= 0$ gives that $u_h$ is a constant function in $\Omega$. Further, since $\mathcal{G}$ is non-empty and $Pu_h = 0$, we have that $u_h=0$.
%\end{proof}

Since $a(\cdot,\cdot)$ is positive definite, we can define the energy $\norm{\cdot}_a$ on $V_h$ as 
\begin{align*}
\norm{v_h}_a^2 = a(v_h,v_h)
\end{align*}
for all $v \in V$.

The following lemma shows that the discrete multivariate spline with the gradient penalty is well-posed 
for $d=1$ but not well-posed for $d>1$. The point-value of a function is not controlled by 
the gradient of the function when $d>1$.  The 
well-posedness is exhibited in the stability result first proved in \cite{GH09}. 
For completeness we have given these results in Lemmas 1, 2 and 3, which are taken from 
\cite{GH09}.

\begin{lemma}\label{lem2}
(Discrete Sobolev inequality). There exists constant $c_d > 0$ such that for all $u \in V_h$
\begin{enumerate}
\item $\abs{u(x)} \leq c_d \norm{u}_{H^1(\Omega)}$ for $d=1$.
\item $\abs{u(x)} \leq c_d \left( 1 + \abs{\log h} \right) \norm{u}_{H^1(\Omega)}$ for $d = 2$.
\item $\abs{u(x)} \leq c_d h^{1-d/2} \norm{u}_{H^1(\Omega)}$ for $d > 2$.
\end{enumerate}
The constant $c_d$ is independent of the mesh-size $h$ but depends on $d$.
These bounds are tight, and for $d > 2$ we have that
\begin{align*}
c_d \geq \frac{1}{\sqrt{3d + 1}}\left( \frac{3}{2} \right)^{d/2}.
\end{align*}
\end{lemma}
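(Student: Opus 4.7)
The plan is to treat the three dimensional cases separately, in each case combining an inverse inequality on the element $T \in \mathcal{T}_h$ containing $x$ with a continuous Sobolev embedding on $\Omega$. For $d=1$ the inequality reduces to the classical continuous embedding $H^1(\Omega) \hookrightarrow C(\bar\Omega)$, which gives $|u(x)| \leq c\,\|u\|_{H^1(\Omega)}$ with a constant independent of any discretisation; no use of the finite element structure is needed.

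For $d > 2$, I would start from the inverse inequality
$$\|u\|_{L^\infty(T)} \leq C\,h^{-d/p^*}\,\|u\|_{L^{p^*}(T)},$$
obtained by pulling back to a reference element where all norms on the finite-dimensional polynomial space $P(\hat T)$ are equivalent, with $p^* = 2d/(d-2)$ the critical Sobolev exponent. Applying the continuous embedding $H^1(\Omega) \hookrightarrow L^{p^*}(\Omega)$ then converts the right-hand side into an $H^1$ bound, and the exponent $-d/p^* = 1 - d/2$ is exactly the one claimed. For the lower bound on $c_d$, I would exhibit a Lagrange hat function $\phi$ supported on the star of a vertex of a uniform simplicial mesh, so that $\|\phi\|_{L^\infty(\Omega)} = 1$, and compute the $H^1(\Omega)$ norm explicitly by summing gradient contributions over the $d+1$ incident simplices; carrying out these Beta-type integrals on the reference simplex and keeping track of the scaling in $h$ produces the stated lower bound $(3/2)^{d/2}/\sqrt{3d+1}$.

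The case $d = 2$ is the most delicate and exploits a family of $L^p$ embeddings indexed by $p$. Here I would combine the inverse inequality $\|u\|_{L^\infty(T)} \leq C\,h^{-2/p}\,\|u\|_{L^p(T)}$ with the two-dimensional Sobolev embedding $H^1(\Omega) \hookrightarrow L^p(\Omega)$, whose constant grows like $\sqrt{p}$ as $p \to \infty$, to obtain
$$|u(x)| \leq C\,\sqrt{p}\,h^{-2/p}\,\|u\|_{H^1(\Omega)}.$$
Optimising over $p$ by choosing $p \sim \log(1/h)$ turns $h^{-2/p}$ into an $O(1)$ constant and leaves a residual factor of order $\sqrt{1 + |\log h|}$, which is dominated by $1 + |\log h|$. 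The main obstacle I anticipate is pinning down the sharp dimensional constant in the lower bound for $d > 2$: although each integral on the reference simplex is elementary, the combinatorics of the $d+1$ incident simplices and the cancellations between their gradient contributions must be tracked carefully to extract the precise factor $1/\sqrt{3d+1}$ rather than some weaker $d$-dependence.
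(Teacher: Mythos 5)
There is nothing in the paper to compare your argument against: the paper states Lemmas 1--3 without proof, noting they are ``taken from'' \cite{GH09}, where the stability result was first proved. Judged on its own merits, your proof of the three upper bounds is correct and follows the standard route. For $d=1$ the statement is indeed the continuous embedding $H^1(\Omega)\hookrightarrow C(\bar\Omega)$. For $d>2$, the inverse estimate $\norm{u}_{L^\infty(T)}\le C h^{-d/p^*}\norm{u}_{L^{p^*}(T)}$ (norm equivalence on the reference element plus scaling, with the constant uniform in the exponent) combined with $H^1(\Omega)\hookrightarrow L^{p^*}(\Omega)$, $p^*=2d/(d-2)$, gives precisely $-d/p^* = 1-d/2$. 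For $d=2$, pairing $\norm{u}_{L^\infty(T)}\le C h^{-2/p}\norm{u}_{L^{p}(T)}$ with the $\sqrt{p}$ growth of the $H^1\hookrightarrow L^p$ embedding constant and optimising $p\sim\log(1/h)$ is the classical argument; it actually yields the sharper factor $\left(1+\abs{\log h}\right)^{1/2}$, which in particular implies the stated bound.

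The genuine gap is in the lower bound $c_d \ge (3/2)^{d/2}/\sqrt{3d+1}$, where your construction as described would not produce the claimed constant. A minor issue first: in a $d$-dimensional simplicial mesh the star of a vertex contains far more than $d+1$ simplices ($d+1$ is the number of vertices of a single simplex, not the valence of a vertex), and there are no cancellations to track, since the $H^1$ norm sums squares of elementwise gradients. The substantive issue is that the factor $(3/2)^{d/2}$ is the signature of a \emph{tensor-product} construction, not a simplicial one: the definition \eqref{fespace} allows rectangular elements with (multi)linear polynomials, and the intended extremal function is the multilinear hat function $\phi(\bx)=\prod_{i=1}^{d}\max\left(0,\,1-\abs{x_i}/h\right)$ on a uniform hypercube mesh. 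For this $\phi$ one computes, one coordinate at a time,
\begin{align*}
\norm{\phi}_{L^2(\Omega)}^2=\left(\tfrac{2h}{3}\right)^{d},\qquad
\norm{\nabla\phi}_{L^2(\Omega)}^2=d\,\tfrac{2}{h}\left(\tfrac{2h}{3}\right)^{d-1},
\end{align*}
so that $\norm{\phi}_{H^1(\Omega)}^2=\left(\tfrac{2}{3}\right)^{d}h^{d-2}\left(h^2+3d\right)$ and, since $\phi(\bzero)=1$,
\begin{align*}
c_d \;\ge\; \frac{\abs{\phi(\bzero)}}{h^{1-d/2}\norm{\phi}_{H^1(\Omega)}}
\;=\;\frac{(3/2)^{d/2}}{\sqrt{3d+h^2}}\;\ge\;\frac{(3/2)^{d/2}}{\sqrt{3d+1}}\quad\text{for } h\le 1,
\end{align*}
which is exactly the stated bound. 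By contrast, the Beta-type integrals on a simplex, e.g.\ $\int_T\lambda_i^2\,d\bx = 2\abs{T}/\bigl((d+1)(d+2)\bigr)$, produce factorial or polynomial dependence on $d$ together with mesh-geometry-dependent gradient terms, so no uniform simplicial hat function reproduces the exponential factor $(3/2)^{d/2}$. Replacing your simplicial star by this multilinear hat function closes the gap and reduces the lower bound to the three-line computation above.
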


\begin{lemma}\label{lem3} (Discrete Poincar\'{e} inequality). Let $(x_0,y_0) \in \Omega$ and $u_0 = u(x_0,y_0)$ for $u \in V_h$. Then there exist constants $c_d > 0$ such that
\begin{enumerate}
\item $\norm{u-u_0}_{L^2(\Omega)} \leq c_d \norm{\nabla u}_{L^2(\Omega)}$ for $d = 1$
\item $\norm{u-u_0}_{L^2(\Omega)} \leq c_d \left( 1 + \abs{\log h}\right) \norm{\nabla u}_{L^2(\Omega)}$ for $d = 2$
\item $\norm{u-u_0}_{L^2(\Omega)} \leq c_d h^{1-d/2} \norm{\nabla u}_{L^2(\Omega)}$ for $d > 2$.
\end{enumerate}
\end{lemma}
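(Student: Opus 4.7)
The plan is to reduce the claim to the Discrete Sobolev inequality of Lemma~\ref{lem2} combined with the standard continuous Poincar\'e--Wirtinger inequality. Let $\bar u = |\Omega|^{-1}\int_\Omega u\, d\mathbf{x}$ denote the mean value of $u$ on $\Omega$, decompose $u - u_0 = (u - \bar u) + (\bar u - u_0)$, and apply the triangle inequality in $L^2(\Omega)$:
\[ \norm{u - u_0}_{L^2(\Omega)} \le \norm{u - \bar u}_{L^2(\Omega)} + \norm{\bar u - u_0}_{L^2(\Omega)}. \]
The first term I would control directly by the continuous Poincar\'e--Wirtinger inequality, obtaining $\norm{u - \bar u}_{L^2(\Omega)} \le C_\Omega\, \norm{\nabla u}_{L^2(\Omega)}$ with $C_\Omega$ depending only on $\Omega$.

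For the second term, $\bar u - u_0$ is a constant function, so $\norm{\bar u - u_0}_{L^2(\Omega)} = |\Omega|^{1/2}\,|(u - \bar u)(x_0)|$. The key observation is that $u - \bar u$ lies in $V_h$, because constants are both piecewise linear and piecewise bilinear. Consequently Lemma~\ref{lem2} applies to $u - \bar u$ at the point $x_0$, yielding
\[ |(u - \bar u)(x_0)| \le c_d\, g_d(h)\, \norm{u - \bar u}_{H^1(\Omega)}, \]
where $g_d(h) = 1$ for $d=1$, $g_d(h) = 1 + |\log h|$ for $d=2$, and $g_d(h) = h^{1-d/2}$ for $d>2$. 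A second application of Poincar\'e--Wirtinger then bounds $\norm{u - \bar u}_{H^1(\Omega)}$ by $(1 + C_\Omega^2)^{1/2}\norm{\nabla u}_{L^2(\Omega)}$, since $\nabla(u-\bar u) = \nabla u$.

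Combining the two pieces produces the three stated inequalities, with the entire $h$-dependence inherited from $g_d(h)$. I do not foresee any serious obstacle: the argument is essentially a triangle-inequality assembly once one notices that $u - \bar u \in V_h$, which is what makes the discrete Sobolev inequality available. The only care needed is to collect the factors $|\Omega|^{1/2}$, $C_\Omega$, and the Lemma~\ref{lem2} constant into a single $\Omega$- and $d$-dependent but $h$-independent constant, still denoted $c_d$.
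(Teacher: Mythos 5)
Your proposal is correct, but it cannot be compared line-by-line with ``the paper's proof'' for a simple reason: the paper never proves Lemma~\ref{lem3} at all --- Lemmas 1--3 are reproduced from the reference \cite{GH09} purely for completeness, so the only argument on offer in the paper is a citation. Your derivation is therefore a genuine addition rather than a variant: it shows that, within the paper's own framework, Lemma~\ref{lem3} is a corollary of Lemma~\ref{lem2} (the discrete Sobolev inequality) combined with the continuous Poincar\'e--Wirtinger inequality. The key step is legitimate: $u - \bar u$ does lie in $V_h$, since constants belong to $P(T)$ in both the linear and bilinear cases and $V_h$ is a vector space, so Lemma~\ref{lem2} may be applied to it; and both Poincar\'e--Wirtinger applications are valid because $V_h \subset H^1(\Omega)$ and $\Omega$ is a bounded Lipschitz (indeed rectangular) domain. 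Two small points should be recorded if you write this up: (i) absorbing the $h$-independent term $C_\Omega \norm{\nabla u}_{L^2(\Omega)}$ into the final constant $c_d\, g_d(h)$ requires $g_d(h) \ge 1$, which is automatic for $d=1,2$ but for $d>2$ holds only when $h \le 1$, so that assumption should be stated; (ii) since Lemma~\ref{lem2} carries the full $H^1$ norm on its right-hand side, your second Poincar\'e--Wirtinger application, using $\nabla(u-\bar u) = \nabla u$, is precisely what converts it into a pure gradient bound --- this is the step that makes the reduction work and is worth emphasising. With those caveats your proof stands, and it has the added merit of making explicit that the mesh-dependence of the discrete Poincar\'e constant is inherited entirely from the discrete Sobolev inequality, which is exactly the mechanism behind the ill-posedness the paper sets out to exhibit.
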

\begin{lemma}\label{lem4} (Discrete V-ellipticity). There exist constants $c_d$ and $C_d$ such that the energy norm on $V_h$ satisfies
\begin{align*}
\alpha_{d,h} \norm{u}_{H^1(\Omega)} \leq \norm{u}_a \leq \beta_{d,h} \norm{u}_{H^1(\Omega)}
\end{align*} 
for all $u \in V_h$, where $\alpha_{d,h}$ and $\beta_{d,h}$ are given by
\begin{enumerate}
\item $\alpha_{d,h} = \left( c_d \lambda^{-1/2} + \lambda^{-1/2} + 1 \right)^{-1}$ and $\beta_{d,h} = C_d + \sqrt{\lambda}$ for $d = 1$
\item $\alpha_{d,h} = \left( c_d \lambda^{-1/2} \left( 1 + \abs{\log h} \right) + \lambda^{-1/2} + 1 \right)^{-1}$ and $\beta_{d,h} = C_d \left( 1 + \abs{\log h} \right) + \sqrt{\lambda}$ for $d = 2$
\item $\alpha_{d,h} = \left( c_d \lambda^{-1/2} h^{1-d/2} + \lambda^{-1/2} + 1 \right)^{-1}$ and $\beta_{d,h} = C_d h^{1-d/2} + \sqrt{\lambda}$ for $d > 2$.
\end{enumerate}
\end{lemma}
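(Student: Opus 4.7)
The plan is to prove the two inequalities separately, with the upper bound being a direct consequence of Lemma~\ref{lem2} and the lower bound requiring a more careful argument that exploits the point evaluations appearing in $\|u\|_a$.

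For the upper bound $\|u\|_a \leq \beta_{d,h}\|u\|_{H^1(\Omega)}$, I would start from the definition
\[
\|u\|_a^2 = \sum_{i=1}^N u(\mathbf{p}_i)^2 + \lambda \int_\Omega \|\nabla u\|^2\, d\mathbf{x},
\]
bound each $|u(\mathbf{p}_i)|$ by the discrete Sobolev inequality of Lemma~\ref{lem2}, and use $\sqrt{\lambda}\|\nabla u\|_{L^2(\Omega)} \leq \sqrt{\lambda}\|u\|_{H^1(\Omega)}$ for the second term. After applying $\sqrt{x+y}\leq\sqrt{x}+\sqrt{y}$, the point-evaluation contribution produces the factors $C_d$, $C_d(1+|\log h|)$ or $C_d h^{1-d/2}$ respectively (absorbing $N$ and $c_d$ from Lemma~\ref{lem2} into $C_d$), while the gradient term contributes $\sqrt{\lambda}$, giving $\beta_{d,h}$ in each dimensional regime.

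For the lower bound $\alpha_{d,h}\|u\|_{H^1(\Omega)} \leq \|u\|_a$, the key trick is to choose the reference point in the discrete Poincar\'e inequality to be one of the scattered data points. Since $\mathcal{G}$ is non-empty, pick any $\mathbf{p}_k \in \mathcal{G}$ and set $u_0 := u(\mathbf{p}_k)$; then $u_0^2 \leq \sum_{i=1}^N u(\mathbf{p}_i)^2 \leq \|u\|_a^2$, so $|u_0|\leq \|u\|_a$. Similarly $\sqrt{\lambda}\|\nabla u\|_{L^2(\Omega)} \leq \|u\|_a$, hence $\|\nabla u\|_{L^2(\Omega)} \leq \lambda^{-1/2}\|u\|_a$. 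Then I would write
\[
\|u\|_{L^2(\Omega)} \leq \|u - u_0\|_{L^2(\Omega)} + |u_0|\sqrt{|\Omega|}
\]
and invoke Lemma~\ref{lem3} on the first term, which contributes $c_d\lambda^{-1/2}\|u\|_a$, $c_d(1+|\log h|)\lambda^{-1/2}\|u\|_a$ or $c_d h^{1-d/2}\lambda^{-1/2}\|u\|_a$ depending on $d$, while the $|u_0|\sqrt{|\Omega|}$ piece yields a term proportional to $\|u\|_a$. Combining this with the gradient bound and using $\|u\|_{H^1(\Omega)}\leq \|u\|_{L^2(\Omega)}+\|\nabla u\|_{L^2(\Omega)}$ gives the desired $\alpha_{d,h}^{-1}$ with the three stated forms (the constant $\sqrt{|\Omega|}$ being absorbed into the additive ``$+1$'' term, possibly after redefining $c_d$).

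I do not expect any deep obstacle; the exercise is bookkeeping the constants so that the three cases line up with the stated expressions for $\alpha_{d,h}$ and $\beta_{d,h}$. The only subtle point is remembering to select the Poincar\'e reference point among the data sites $\{\mathbf{p}_i\}$, because this is precisely what lets the $L^2$ norm be controlled by $\|u\|_a$ rather than merely by $\lambda^{-1/2}\|u\|_a$ plus an uncontrolled mean value; without a non-empty $\mathcal{G}$ the constant function would lie in the kernel of $\|\cdot\|_a$ and no such lower bound could hold.
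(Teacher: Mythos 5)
The paper itself contains no proof of this lemma: it is quoted, together with the discrete Sobolev and Poincar\'e inequalities, directly from \cite{GH09}, and those two preceding lemmas are stated precisely as the ingredients needed here. Your argument --- discrete Sobolev inequality plus $\sqrt{x+y}\leq\sqrt{x}+\sqrt{y}$ for the upper bound, and for the lower bound the discrete Poincar\'e inequality anchored at a data site $\mathbf{p}_k\in\mathcal{G}$ together with the bounds $|u(\mathbf{p}_k)|\leq\|u\|_a$ and $\|\nabla u\|_{L^2(\Omega)}\leq\lambda^{-1/2}\|u\|_a$ --- is correct and is exactly the argument that the stated constants encode, up to the harmless bookkeeping you flag yourself ($C_d$ absorbing $\sqrt{N}c_d$, and the additive ``$+1$'' in $\alpha_{d,h}^{-1}$ really being $\sqrt{|\Omega|}$ unless the domain is normalised).
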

The above results imply that for the solution $u_h \in V_h$ of 
the spline with the gradient penalty we have 
\begin{align*}
\norm{u_h}_a \leq \frac{\beta_{d,h}}{\alpha_{d,h}} \norm{\ell}_{L^2(\Omega)}.
\end{align*}
\begin{remark}
We can see that the ill-posedness is exhibited in the stability constant being not 
independent of the mesh-size $h$. There is no easy way to remove this dependency.
\end{remark}

\section{New Multivariate Spline with Mixed Derivative Penalty}
In order to define the new multivariate spline, we define the associated 
Sobolev space. 
Let $\cB = \{0,1\}^d\backslash \{\bzero\}$, where 
$\bzero \in \R^d$ is a zero vector.
We use a standard multi-index notation with 
$\balpha = (\alpha_1,\cdots,\alpha_d)\in \cB$ so that 
a mixed derivative of  a sufficiently smooth function $u$ is denoted by 
\[ D^{\balpha} u =  \frac{\partial^{\sum_{i=1}^d\alpha_i} u}{\partial x_1^{\alpha_1}\cdots 
\partial x_d^{\alpha_d}},\]
where  we use the usual Cartesian coordinate system with 
$\bx =(x_1,\cdots,x_d) \in \R^d$.

We now define our Sobolev space for the multivariate spline problem as 
 \[  H_m^1(\Omega) := \left\{ u \in L^2(\Omega):\, D^{\balpha}u \in L^2(\Omega),\,
 \balpha \in \cB\right\},\]
which is equipped with the norm 
 \[ \|u\|_{H_m^1(\Omega)} = \sqrt{  \|u\|^2_{L^2(\Omega)}  + 
  \sum_{\balpha \in \cB}\|D^{\balpha} u\|^2_{L^2(\Omega)}}.
  \]
We note that the space 
$H^1_m(\Omega)$ is a Hilbert space,  and $H^1_m(\Omega) \subset C^0(\Omega)$
\cite{ST87}. The new multivariate spline is then obtained as a solution of 
the minimisation problem 
\begin{align}
\min_{u \in H^1_m(\Omega)} \left( \sum_{i=1}^N (u(\mathbf{p}_i)-z_i)^2 + \lambda\|u\|_{H^1_m(\Omega)}^2 \right).
\end{align}

For $d=2$ we write our bivariate spline as
\begin{align}
\min_{u \in H^1_m(\Omega)} \left( \sum_{i=1}^N (u(\mathbf{p}_i)-z_i)^2 + \lambda\int_{\Omega} \norm{\nabla u}^2 + \left(\frac{\partial^2 u}{\partial x \partial y}\right)^2 \mathop{d\mathbf{x}} \right).
\end{align}

 We now introduce a bilinear form $b(\cdot,\cdot)$ and a linear form $\ell(\cdot)$, given by
\begin{align*}
b(u,v) &= (Pu)^T(Pv) + \lambda\int_{\Omega} \nabla u^T \nabla v + \frac{\partial^2 u}{\partial x \partial y} \frac{\partial^2 v}{\partial x \partial y} \mathop{d\mathbf{x}}, \\
\ell(v) &= (Pv)^T \mathbf{z},
\end{align*}
where
\begin{align*}
Pu = (u(x_1,y_1),u(x_2,y_2),\dots, u(x_N,y_N))^T
\end{align*}
is a column vector of the function values of $u$ at the scattered points $\mathcal{G}=\left\lbrace \mathbf{p}_i \right\rbrace_{i=1}^N$, and $\mathbf{z} \in \R^N$ is a column vector with $i$th component $z_i$. Then the continuous problem is to find $u \in V$ such that
\begin{align}
b(u,v) = \ell(v)
\label{b(u,v)}
\end{align}
for all $v \in V$.

Let $\Omega$ be a rectangle in $\R^2$. Then let $\mathcal{T}_h$ be the tensor product partition of the domain with mesh size $h$, such that each element $T \in \mathcal{T}_h$ is a rectangle. Then we define a finite element space $V_h$ as
\begin{align*}
V_h = \left\lbrace u_h \in C^0(\Omega) : u_h|_T \in \mathcal{P}(T), T \in \mathcal{T}_h \right\rbrace,
\end{align*}

where $\mathcal{P}(T)$ is the space of bilinear polynomials on $T$. We can now write our discrete problem as
\begin{align*}
\min_{u_h \in V_h} \left( \sum_{i=1}^N (u_h(\mathbf{p}_i)-z_i)^2 + \lambda\int_{\Omega} \norm{\nabla u_h}^2 + \left(\frac{\partial^2 u_h}{\partial x \partial y}\right)^2 \mathop{d\mathbf{x}} \right).
\end{align*}
That is, the discrete problem is to find $u_h \in V_h$ such that
%We want to show that this problem is well-posed. Let 
%\begin{align*}
%Pu_h = (u_h(x_1,y_1),u_h(x_2,y_2),\dots, u_h(x_N,y_N))^T
%\end{align*} 
%be a column vector of the function values of $u_h$ at the scattered points $\mathcal{G} = \left\lbrace (\mathbf{p}_i) \right\rbrace_{i=1}^N$, and let $\mathbf{z} \in \R^N$ be a column vector with $i$th component $z_i$. We introduce a bilinear form $b(\cdot,\cdot)$ and a linear form $\ell(\cdot)$, given by
%\begin{align*}
%b(u_h,v_h) &= (Pu_h)^T(Pv_h) + \lambda\int_{\Omega} \nabla u_h^T \nabla v_h + \frac{\partial^2 u_h}{\partial x \partial y} \frac{\partial^2 v_h}{\partial x \partial y} \mathop{d\mathbf{x}}, \\
%\ell(v_h) &= (Pv_h)^T \mathbf{z}.
%\end{align*}
%Then the discrete problem is to find $u_h \in V_h$ such that
\begin{align}
b(u_h,v_h) = \ell(v_h)
\label{b(uh,vh)}
\end{align}
for all $v_h \in V_h$.

The discrete problem is shown to be well-posed in \cite{Lam14}. Here 
we recall some of the important results. 
We first show that the bilinear form $b(\cdot,\cdot)$ is positive definite on $V_h$.
\begin{lemma}
Let $\lambda > 0$ and let the set of scattered points $\mathcal{G}$ be non-empty. Then the bilinear form $b(\cdot,\cdot)$ is positive definite on the vector space $V_h$.
\end{lemma}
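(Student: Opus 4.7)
The plan is to show the two standard ingredients of positive definiteness separately: non-negativity $b(u,u)\ge 0$, and the definiteness implication $b(u,u)=0\Rightarrow u=0$ on $V_h$.

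Non-negativity is immediate because
\[
b(u,u) = |Pu|^2 + \lambda\int_{\Omega}\Bigl(\|\nabla u\|^2 + \Bigl(\frac{\partial^2 u}{\partial x\partial y}\Bigr)^{\!2}\Bigr)\,d\bx,
\]
which is a sum of squared $L^2$- and Euclidean norms scaled by $\lambda>0$, hence $\ge 0$.

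For definiteness, I would suppose $b(u,u)=0$ and observe that each non-negative summand must vanish individually. In particular, $\int_\Omega \|\nabla u\|^2\,d\bx = 0$, so $\nabla u = 0$ almost everywhere. Since $u\in V_h\subset C^0(\Omega)$ and $\Omega$ is a (connected) rectangle, this forces $u$ to be a constant function on $\Omega$. Alternatively, and more concretely for the bilinear element space, on each rectangle $T\in\mathcal{T}_h$ one may write $u|_T = a_T + b_T x + c_T y + d_T xy$, and $\nabla u = 0$ on $T$ yields $b_T = c_T = d_T = 0$; continuity across element interfaces then propagates the same constant globally.

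The condition $Pu = 0$ means $u(\mathbf{p}_i)=0$ for every $\mathbf{p}_i\in\mathcal{G}$. Since $\mathcal{G}$ is non-empty by hypothesis, at least one such equation is available, and combined with $u\equiv\text{const}$ it forces the constant to be zero, so $u\equiv 0$ on $\Omega$. Thus $b(u,u)=0\Rightarrow u=0$, completing positive definiteness.

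There is no real obstacle here; the mixed-derivative term plays no role in this particular argument, and the key observation is merely that the gradient penalty alone already kills all non-constant elements of $V_h$, after which the single-point evaluation supplied by the non-empty $\mathcal{G}$ pins the remaining constant. The result is therefore almost entirely a direct verification, and I would keep the proof short and emphasise exactly where $\lambda>0$, the connectedness of $\Omega$, and $\mathcal{G}\neq\emptyset$ are used.
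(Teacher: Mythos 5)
Your proposal is correct and follows essentially the same argument as the paper: from $b(u,u)=0$ one extracts $Pu=0$ and $\nabla u=0$, uses continuity of $u\in V_h$ to conclude $u$ is constant, and then uses the non-emptiness of $\mathcal{G}$ to pin that constant to zero. The extra details you supply (explicit non-negativity, the elementwise bilinear computation, and the remark that the mixed-derivative term is not needed) are harmless refinements of the same route.
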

\begin{proof}
If $u_h = 0$, then clearly $b(u_h,u_h) = 0$. Conversely, let $b(u_h,u_h) = 0$. Then
\begin{align*}
Pu_h = 0, \text{ } \nabla u_h = 0, \text{ and } \frac{\partial^2 u_h}{\partial x \partial y} = 0.
\end{align*}

Since $u_h$ is a continuous function, $\nabla u_h = 0$ gives that $u$ is a constant function in $\Omega$. Further, since $\mathcal{G}$ is non-empty and $Pu_h = 0$, we have that $u_h=0$.
\end{proof}

Since $b(\cdot,\cdot)$ is positive definite, we can define the energy norm $\norm{\cdot}_b$ on $V_h$ as
\begin{align*}
\norm{v_h}_b^2 = b(v_h,v_h)
\end{align*}
for all $v_h \in V_h$. Since $b(\cdot,\cdot)$ and $\ell(\cdot)$ satisfy the conditions of the Lax-Milgram lemma \cite{BS94},\cite{Cia78}, the unique minimiser is the solution of the discrete problem \eqref{b(uh,vh)}.  In addition, the following holds.

\begin{lemma}
Let $\lambda > 0$ and let $\mathcal{G}$ be non-empty. Then the discrete problem \eqref{b(uh,vh)} admits a unique solution which depends continuously on the data with respect to the energy norm $\norm{\cdot}_b$.
\end{lemma}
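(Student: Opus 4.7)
The strategy is a direct application of the Lax–Milgram lemma in the finite-dimensional setting, so essentially everything is already in place from the preceding lemma together with the definition of $\|\cdot\|_b$. I would organise the proof in three short steps.

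First, I would observe that the previous lemma shows $b(\cdot,\cdot)$ is symmetric and positive definite on $V_h$, so $\|\cdot\|_b$ is genuinely a norm and $(V_h,\|\cdot\|_b)$ is a (finite-dimensional, hence complete) inner product space. With respect to this norm the form $b$ is trivially continuous and coercive with constant $1$: by Cauchy–Schwarz one has $b(u_h,v_h)\leq \|u_h\|_b\|v_h\|_b$ and $b(v_h,v_h)=\|v_h\|_b^2$. The only real thing to verify is continuity of $\ell$: since $\ell(v_h)=(Pv_h)^T\mathbf{z}$ and $|Pv_h|\leq|\mathbf{z}|\cdot|Pv_h|$, while $|Pv_h|^2=\sum_i v_h(\mathbf{p}_i)^2\leq b(v_h,v_h)=\|v_h\|_b^2$ because $\mathcal{G}$ contributes the term $(Pv_h)^T(Pv_h)$ to $b$, we obtain $|\ell(v_h)|\leq |\mathbf{z}|\,\|v_h\|_b$. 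This is the cleanest bound because it avoids any discrete Sobolev or inverse inequality and, in particular, avoids any mesh-size dependence (which is where the gradient-penalty formulation ran into trouble).

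With continuity and coercivity in hand, the Lax–Milgram lemma applied on the finite-dimensional Hilbert space $(V_h,\|\cdot\|_b)$ gives the existence and uniqueness of $u_h\in V_h$ satisfying \eqref{b(uh,vh)}. For continuous dependence on the data, I would test \eqref{b(uh,vh)} with $v_h=u_h$ to get
\begin{equation*}
\|u_h\|_b^2 = b(u_h,u_h) = \ell(u_h) \leq |\mathbf{z}|\,\|u_h\|_b,
\end{equation*}
hence $\|u_h\|_b\leq|\mathbf{z}|$. By linearity in the data, if $u_h$ and $u_h'$ correspond to data $\mathbf{z}$ and $\mathbf{z}'$, then $w_h:=u_h-u_h'$ solves $b(w_h,v_h)=(Pv_h)^T(\mathbf{z}-\mathbf{z}')$, and the same argument yields $\|u_h-u_h'\|_b\leq|\mathbf{z}-\mathbf{z}'|$.

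There is no real obstacle here; the only step worth flagging is making explicit why $\ell$ is continuous in the energy norm without invoking any mesh-dependent inequality, which is precisely what distinguishes this formulation from the gradient-penalty one and is the reason the stability constant is independent of $h$.
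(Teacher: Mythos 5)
Your proof is correct and takes essentially the same route as the paper: continuity of $b(\cdot,\cdot)$ and of $\ell(\cdot)$ with respect to the energy norm $\norm{\cdot}_b$, coercivity $b(u_h,u_h)=\norm{u_h}_b^2$, and then the Lax--Milgram lemma. You actually make explicit two details the paper leaves implicit --- the bound $\abs{Pv_h}\leq\norm{v_h}_b$ that justifies $\abs{\ell(v_h)}\leq\norm{\mathbf{z}}\norm{v_h}_b$, and the linearity argument giving $\norm{u_h-u_h'}_b\leq\norm{\mathbf{z}-\mathbf{z}'}$ --- though note the small typo where you wrote $\abs{Pv_h}\leq\abs{\mathbf{z}}\cdot\abs{Pv_h}$ in place of $\abs{\ell(v_h)}\leq\abs{\mathbf{z}}\cdot\abs{Pv_h}$.
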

\begin{proof}
We have that
\begin{align*}
\abs{b(u_h,v_h)} &\leq \norm{u_h}_b \norm{v_h}_b, \text{ and}\\
\abs{\ell(v_h)} &\leq \norm{\mathbf{z}} \norm{v_h}_b
\end{align*}
for all $u_h,v_h \in V_h$. Hence $b(\cdot,\cdot)$ and $\ell(\cdot)$ are continuous on $V_h$. We also have that
\begin{align*}
b(u_h,u_h) = \norm{u_h}_b^2
\end{align*}
for all $u_h \in V_h$. Hence $b(\cdot,\cdot)$ is coercive on $V_h$. By the Lax-Milgram lemma \cite{BS94},\cite{Cia78}, there exists a unique solution $u_h$ of the discrete problem \ref{b(uh,vh)}. Additionally, the solution depends continuously on the data $\mathbf{z}$.
\end{proof}

In addition, a direct application of the C\'{e}a lemma provides an optimal \textit{a priori} estimate of the discrete solution.
\begin{lemma}
Let $u$ be the solution to the continuous problem \eqref{b(u,v)}, and let $u_h$ be the solution to the discrete problem \eqref{b(uh,vh)}. Then
\begin{align*}
\norm{u-u_h}_b \leq \inf_{v_h \in V_h} \norm{u-v_h}_b.
\end{align*}
\end{lemma}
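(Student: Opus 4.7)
The plan is to apply the standard Céa lemma argument, exploiting that here the energy norm is literally induced by $b(\cdot,\cdot)$, so the usual coercivity and continuity constants are both $1$ and collapse the estimate to the stated form.

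First I would note that $V_h \subset V = H^1_m(\Omega)$, so the continuous problem \eqref{b(u,v)} and the discrete problem \eqref{b(uh,vh)} together give the Galerkin orthogonality
\begin{align*}
b(u-u_h, v_h) = 0 \quad \text{for all } v_h \in V_h,
\end{align*}
obtained by subtracting \eqref{b(uh,vh)} from \eqref{b(u,v)} tested against an arbitrary $v_h \in V_h$.

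Next, for any $v_h \in V_h$ I would decompose $u - u_h = (u - v_h) + (v_h - u_h)$ and compute
\begin{align*}
\norm{u-u_h}_b^2 = b(u-u_h, u-u_h) = b(u-u_h, u-v_h) + b(u-u_h, v_h - u_h).
\end{align*}
Since $v_h - u_h \in V_h$, the second term vanishes by Galerkin orthogonality. Applying the Cauchy--Schwarz inequality for the inner product $b(\cdot,\cdot)$ to the remaining term yields
\begin{align*}
\norm{u-u_h}_b^2 \leq \norm{u-u_h}_b \, \norm{u-v_h}_b,
\end{align*}
and dividing through (the case $u = u_h$ being trivial) gives $\norm{u-u_h}_b \leq \norm{u-v_h}_b$. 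Taking the infimum over $v_h \in V_h$ produces the claimed bound.

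There is no real obstacle here: the only things to verify are that $b(\cdot,\cdot)$ is a genuine inner product on $V$ (which follows from the preceding positive-definiteness lemma together with non-emptiness of $\mathcal{G}$) and that $V_h$ is a conforming subspace of $V$, so Galerkin orthogonality is available. The sharpness of the constant $1$ is a direct consequence of using the energy norm itself rather than a different norm on $V_h$.
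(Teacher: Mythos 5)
Your proof is correct and matches the paper's approach: the paper states this lemma as ``a direct application of the C\'{e}a lemma,'' and your argument---Galerkin orthogonality from conformity $V_h \subset H^1_m(\Omega)$, followed by Cauchy--Schwarz in the $b$-inner product---is precisely that application written out, with the constant collapsing to $1$ because the error is measured in the energy norm itself.
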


%The continuous problem is to find $u \in H_m^1(\Omega)$ such that $b(u,v) = \ell(v)$ for all $v \in H_m^1(\Omega)$.

Each finite element basis function is associated with a point in the tensor product partition $\mathcal{T}_h$. Assuming there are $mn$ points, we have $mn$ basis functions. Let $\left\lbrace \phi_i \right\rbrace_{i=1}^{mn}$ be the set of finite element basis functions, which span $V_h$. Then we can write our solution $u_h \in V_h$ as a linear combination of these basis functions, namely
\begin{align*}
u_h(x,y) = \sum_{i=1}^{mn} u_i\phi_i(x,y).
\end{align*}

Let $\mathbf{u} = (u_1,u_2,\dots,u_{mn})^T$ and let $K$ be the finite element stiffness matrix, where $K_{ij} = \int_\Omega \nabla u_h^T \nabla v_h \mathop{d\mathbf{x}}$. Let $M$ be a mixed partial derivative matrix, where $M_{ij} = \int_\Omega \frac{\partial u_h}{\partial x \partial y} \frac{\partial v_h}{\partial x \partial y} \mathop{d\mathbf{x}}$. Then we want to find the solution to the linear system
\begin{align*}
(A^T A + \lambda(K + M))\mathbf{u} = A^T \mathbf{z},
\end{align*}

where $A$ is a matrix of size $N \times mn$, with entries $A_{ij} = \phi_j(\mathbf{p}_i)$.
\section{Numerical Results}
\subsection{Real life images}

We would like to recover some real life images. Consider an image of size $m \times n$. Then we define a tensor product partition $\mathcal{T}_h$ of the square $[0,1] \times [0,1]$ using the collection of points 
\begin{align*}
\mathcal{N}_h= \left\lbrace (a_i,b_j) \right\rbrace_{i=1,j=1}^{n,m}, \quad
\text{where } a_i = \frac{i-1}{n-1}, \;
\text{and } b_j = \frac{j-1}{m-1}.
\end{align*}
Then each pixel of the image is associated with a grid point in $\mathcal{N}_h$.

Since we know the images before they have noise applied to them, we will use peak signal-to-noise ratio (PSNR) to compare the results. Let the original image be given by $I$, and the recovered image be given by $\hat{I}$. Then the PSNR is given by
\begin{align*}
\text{PSNR} 
= 10\cdot \log_{10}\left( \frac{\text{MAX}_{I}^2}{\text{MSE}} \right) 
= 20\cdot \log_{10}\left( \frac{\text{MAX}_{I}}{\sqrt{\text{MSE}}} \right),
\end{align*}

where $\text{MAX}_{I}$ is the maximum pixel value of the image, and MSE is the mean square error. We note that MSE is given by
\begin{align*}
\text{MSE}
= \frac{1}{mn} \sum_{i=1}^{m} \sum_{j=1}^{n} \norm{I_{ij}-\hat{I}_{ij}}^2.
\end{align*}

We now consider two test images. These images are the Lena image and Baboon image (see Figure \ref{fig:10}). We will apply both Gaussian and impulsive noise to these images. The Gaussian noise has zero mean and variances 0.05 and 0.1, and the salt and pepper noise has densities from $30\%$ through to $80\%$. 

We will now use the three different splines to reconstruct the images. As an example, we will first consider the images corrupted with Gaussian noise with variance 0.05, and impulsive noise of density 60\%. In the first image of Figure \ref{fig:11}, we show the noisy Lena image. The next three images show the reconstructed images obtained by the three splines. The results for the Baboon image are shown in Figure \ref{fig:12}.

\begin{figure}[H]
\begin{center}
\includegraphics[scale=0.2]{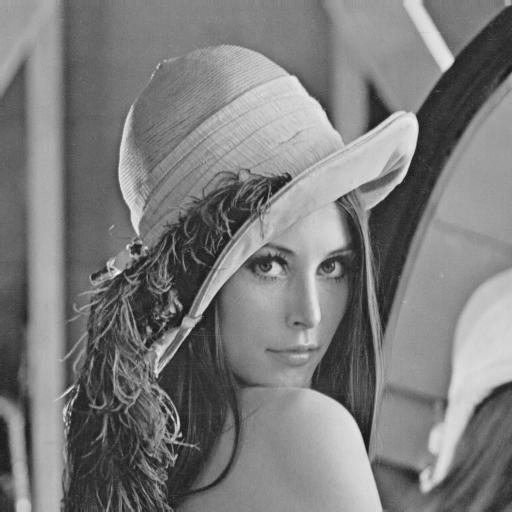}
\hspace{5mm}
\includegraphics[scale=0.2]{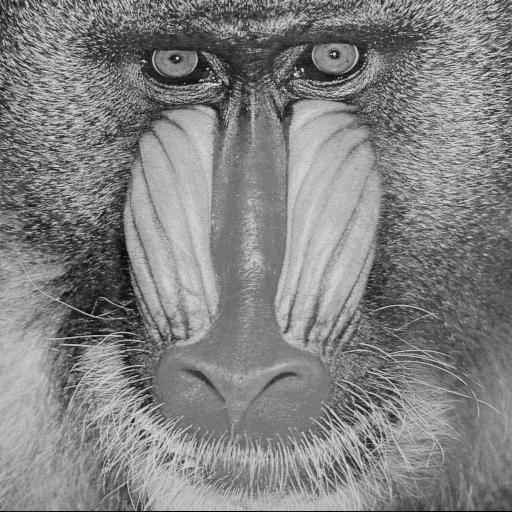}
\caption{Lena image (left), Baboon image (right)}
\label{fig:10}
\end{center}
\end{figure}

\begin{figure}[H]
\begin{center}
\includegraphics[scale=0.54,trim={14cm 10cm 4.6cm 9.2cm},clip]{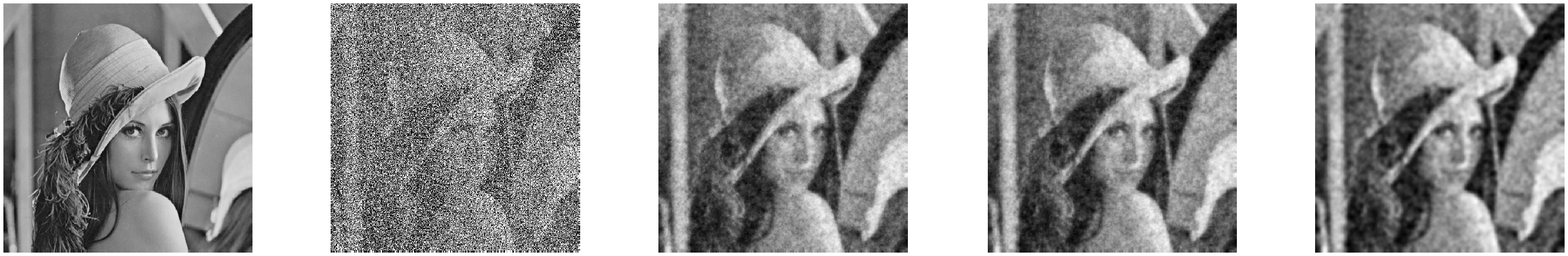}
\caption{Noisy Lena image ($\sigma^2 = 0.05$, $\text{d}=60$) (first), recovered image using gradient penalty spline (second), recovered image using mixed derivative spline (third), recovered image using biharmonic spline (fourth)}
\label{fig:11}
\end{center}
\end{figure}

\begin{figure}[H]
\begin{center}
\includegraphics[scale=0.54,trim={14cm 10cm 4.6cm 9.2cm},clip]{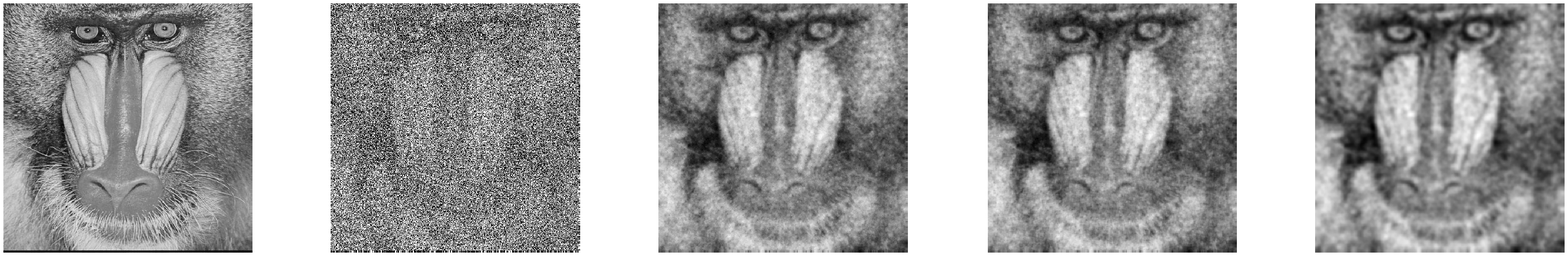}
\caption{Noisy Baboon image ($\sigma^2 = 0.05$, $\text{d}=60$) (first), recovered image using gradient penalty spline (second), recovered image using mixed derivative spline (third), recovered image using biharmonic spline (fourth)}
\label{fig:12}
\end{center}
\end{figure}

We will now show the PSNR for the reconstructed images in Tables 1-4 below.

\begin{table}[H]
\begin{center}
\caption{Lena image PSNR for Gaussian (variance 0.05) and impulsive noises.}
\begin{tabular}{ |c|c|c|c|c|c|c| }
\hline
& \multicolumn{6}{|c|}{Lena image PSNR} \\
\hline
& \multicolumn{6}{|c|}{Noise density} \\
\hline
& 30\% & 40\% & 50\% & 60\% & 70\% & 80\% \\
\hline
Grad. & 22.13 & 22.01 & 22.22 & 21.90 & 21.42 & 20.91 \\ 
Mixed & 22.29 & 22.28 & 22.42 & 21.93 & 21.59 & 20.80 \\ 
Biharm. & 22.98 & 22.95 & 22.30 & 22.09 & 21.74 & 21.48 \\ 
 \hline
\end{tabular}
\label{tab:1}
\end{center}
\end{table}

\begin{table}[H]
\begin{center}
\caption{Baboon image PSNR for Gaussian (variance 0.05) and impulsive noises.}
\begin{tabular}{ |c|c|c|c|c|c|c| }
\hline
& \multicolumn{6}{|c|}{Baboon image PSNR} \\
\hline
& \multicolumn{6}{|c|}{Noise density} \\
\hline
& 30\% & 40\% & 50\% & 60\% & 70\% & 80\% \\
\hline
Grad. & 19.14 & 19.09 & 18.81 & 18.48 & 18.47 & 18.37 \\ 
Mixed & 19.14 & 19.08 & 18.74 & 18.48 & 18.35 & 18.32 \\ 
Biharm. & 18.89 & 18.71 & 18.47 & 18.12 & 17.84 & 17.83 \\ 
 \hline
\end{tabular}
\label{tab:2}
\end{center}
\end{table}

\begin{table}[H]
\begin{center}
\caption{Lena image PSNR for Gaussian (variance 0.1) and impulsive noises.}
\begin{tabular}{ |c|c|c|c|c|c|c| }
\hline
& \multicolumn{6}{|c|}{Lena image PSNR} \\
\hline
& \multicolumn{6}{|c|}{Noise density} \\
\hline
& 30\% & 40\% & 50\% & 60\% & 70\% & 80\% \\
\hline
Grad. & 21.45 & 21.40 & 20.81 & 20.34 & 20.21 & 19.82 \\ 
Mixed & 21.77 & 21.69 & 20.80 & 20.44 & 20.21 & 19.85 \\ 
Biharm. & 22.20 & 22.09 & 21.56 & 21.10 & 20.48 & 20.14 \\ 
 \hline
\end{tabular}
\label{tab:3}
\end{center}
\end{table}

\begin{table}[H]
\begin{center}
\caption{Baboon image PSNR for Gaussian (variance 0.1) and impulsive noises.}
\begin{tabular}{ |c|c|c|c|c|c|c| }
\hline
& \multicolumn{6}{|c|}{Baboon image PSNR} \\
\hline
& \multicolumn{6}{|c|}{Noise density} \\
\hline
& 30\% & 40\% & 50\% & 60\% & 70\% & 80\% \\
\hline
Grad. & 18.24 & 18.20 & 17.93 & 18.09 & 17.89 & 17.65 \\ 
Mixed & 18.18 & 18.17 & 17.89 & 18.06 & 17.86 & 17.56 \\ 
Biharm. & 18.06 & 17.85 & 17.66 & 17.54 & 17.40 & 17.06 \\ 
 \hline
\end{tabular}
\label{tab:4}
\end{center}
\end{table}

Note that we have chosen our parameter $\lambda$ using generalised cross validation \cite{Wah90} and the stochastic trace estimator proposed by Hutchinson \cite{Hut89}. We note that this gives a good estimate of the optimal parameter. In Figure \ref{fig:PSNR GCV} we have plotted the PSNR and the generalised cross validation function versus $\lambda$. Note that the validation function has been scaled for visualisation purposes. For both plots, the Lena image has been corrupted with Gaussian noise with variance 0.05, and has been recovered with the mixed derivative spline. In the left plot, the image has been corrupted with impulsive noise of density 30\% while in the right plot the density is 40\%.

\begin{figure}[H]
\begin{center}
\includegraphics[scale=0.32,trim={0.5cm 1.5cm 1.5cm 1.6cm},clip]{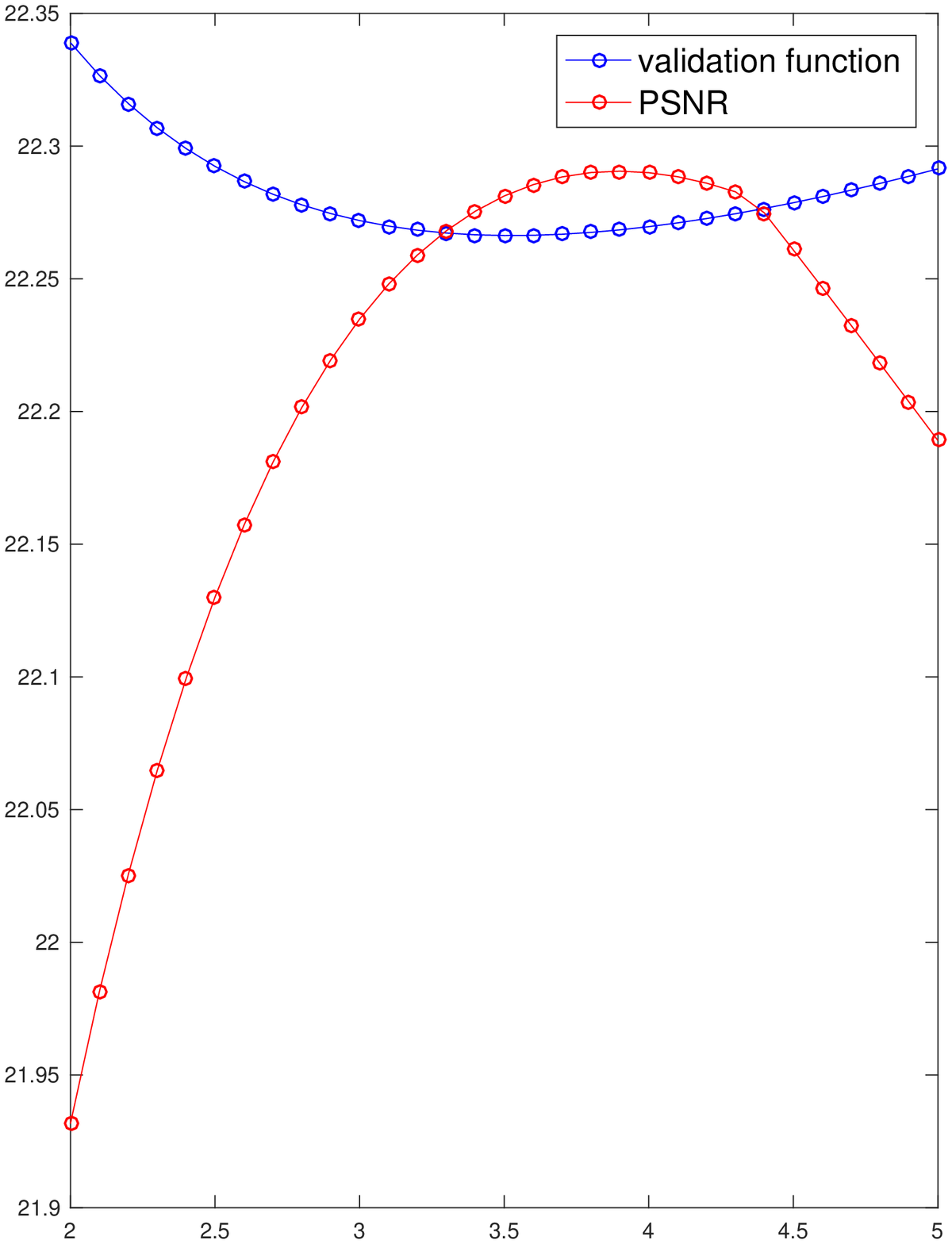}
\hspace{5mm}
\includegraphics[scale=0.32,trim={0.5cm 1.5cm 1.5cm 1.6cm},clip]{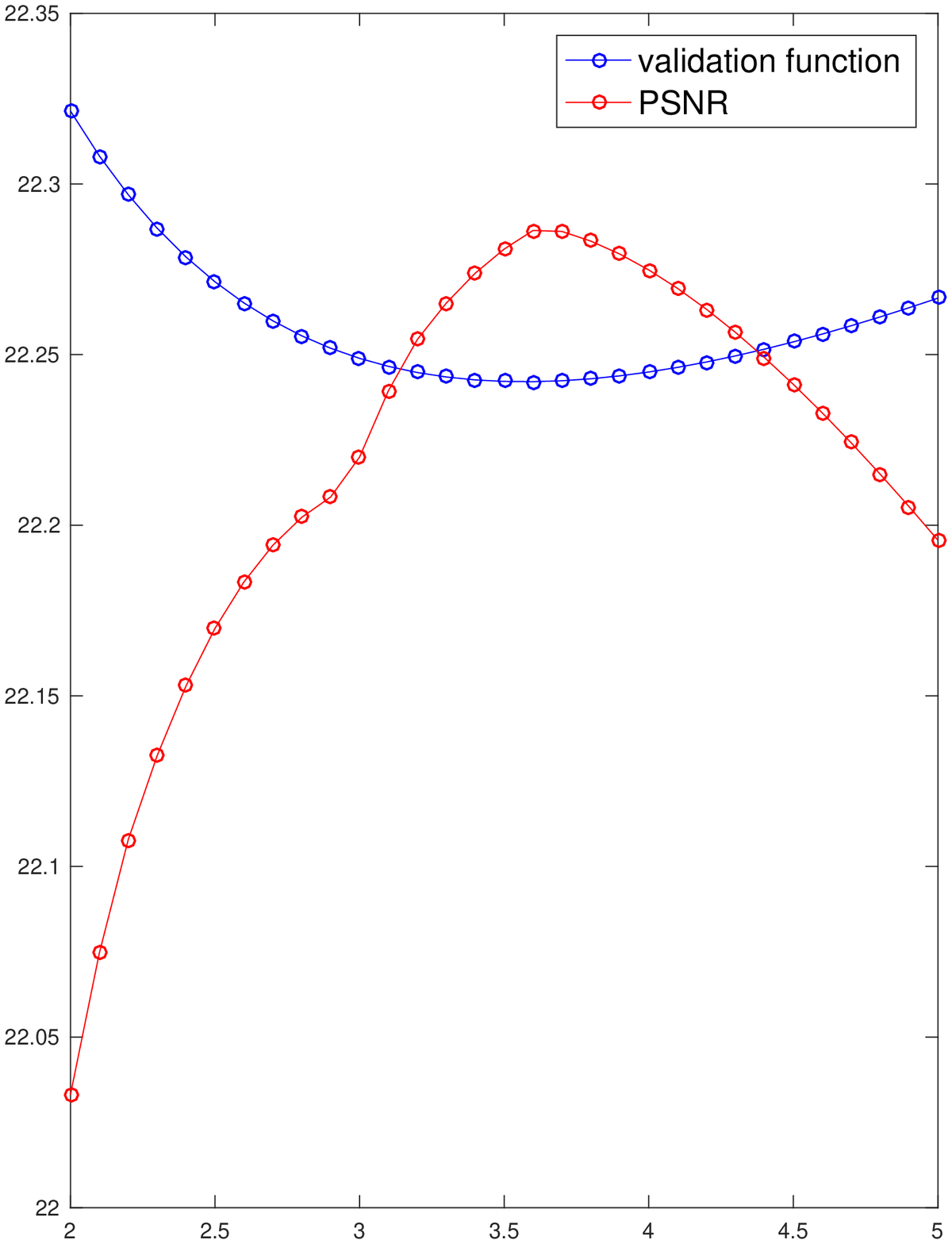}
\caption{Generalised cross validation function and PSNR versus $\lambda$ for Gaussian noise with variance 0.05 and impulsive noise with densities 30\% (left) and 40\% (right).}
\label{fig:PSNR GCV}
\end{center}
\end{figure}

\subsection{Binary image}

We will now apply the same methods to a binary test image (see Figure \ref{fig:13}). As an example, consider the image is corrupted with Gaussian noise of variance $0.05$, and impulsive noise of density $60\%$. We show the noisy image and the reconstructed images in Figure \ref{fig:14}.

\begin{figure}[H]
\begin{center}
\includegraphics[scale=0.32]{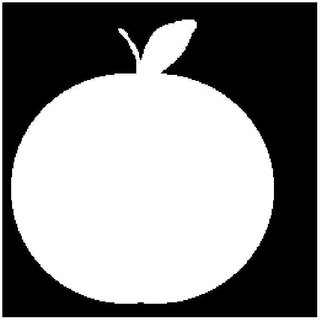}
\caption{Binary image}
\label{fig:13}
\end{center}
\end{figure}

\begin{figure}[H]
\begin{center}
\includegraphics[scale=0.43,trim={6.3cm 9cm 0cm 8.7cm},clip]{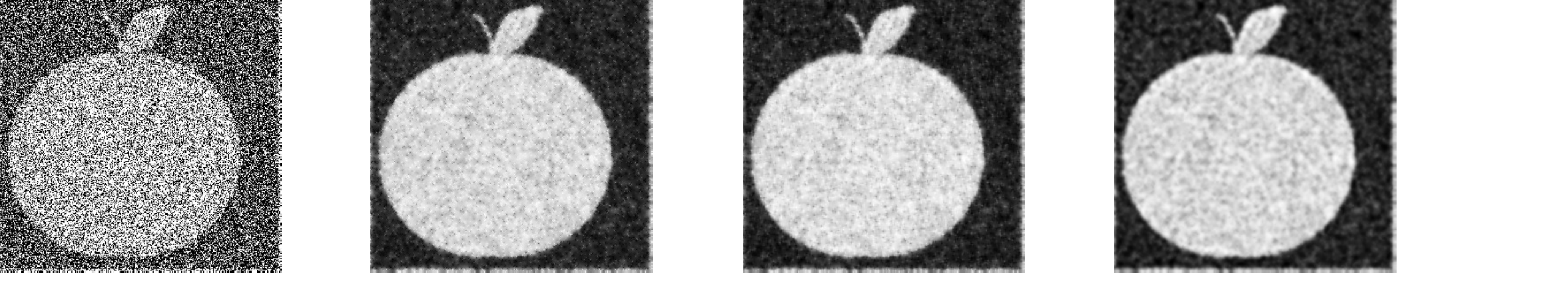}
\caption{Noisy Binary image ($\sigma^2 = 0.05$, $\text{d}=60$) (first), recovered image using gradient penalty spline (second), recovered image using mixed derivative spline (third), recovered image using biharmonic spline (fourth)}
\label{fig:14}
\end{center}
\end{figure}

We will now show the PSNR for the reconstructed image in Tables 5 and 6 below.

\begin{table}[H]
\begin{center}
\caption{Binary image PSNR for Gaussian (variance 0.05) and impulsive noises.}
\begin{tabular}{ |c|c|c|c|c|c|c| }
\hline
& \multicolumn{6}{|c|}{Binary image PSNR} \\
\hline
& \multicolumn{6}{|c|}{Noise density} \\
\hline
& 30\% & 40\% & 50\% & 60\% & 70\% & 80\% \\
\hline
Grad. & 14.77 & 14.66 & 14.37 & 14.19 & 13.77 & 13.47 \\ 
Mixed & 15.37 & 15.21 & 15.03 & 14.84 & 14.43 & 14.00 \\ 
Biharm. & 15.17 & 14.93 & 14.86 & 14.49 & 14.39 & 13.72 \\ 
 \hline
\end{tabular}
\label{tab:5}
\end{center}
\end{table}

\begin{table}[H]
\begin{center}
\caption{Binary image PSNR for Gaussian (variance 0.1) and impulsive noises.}
\begin{tabular}{ |c|c|c|c|c|c|c| }
\hline
& \multicolumn{6}{|c|}{Binary image PSNR} \\
\hline
& \multicolumn{6}{|c|}{Noise density} \\
\hline
& 30\% & 40\% & 50\% & 60\% & 70\% & 80\% \\
\hline
Grad. & 13.24 & 13.05 & 13.01 & 12.61 & 12.49 & 12.20 \\ 
Mixed & 13.58 & 13.38 & 13.27 & 13.03 & 12.86 & 12.50 \\ 
Biharm. & 13.36 & 13.10 & 12.23 & 12.88 & 12.89 & 12.56 \\ 
 \hline
\end{tabular}
\label{tab:6}
\end{center}
\end{table}
\subsection{Continuous functions}

We would now like to recover continuous functions. We define a tensor product partition $\mathcal{T}_h$ of the square $[-1,1] \times [-1,1]$ using the set of points 
\begin{align*}
\mathcal{N}_h= \left\lbrace (a_i,b_j) \right\rbrace_{i=1,j=1}^{n,m}, \quad
\text{where } a_i = \frac{2(i-1)}{n-1} - 1, \;
\text{and } b_j = \frac{2(j-1)}{m-1} - 1.
\end{align*}

We sample the function value at each point in the partition, and then apply Gaussian noise of variance $0.05$. We then refine the partition several times, which halves the mesh size $h$ in each iteration.
We will now consider the first test function. Let the function $f$ be given by
$f(x,y) = \sin(3x) e^{x^2 - y^2} $
over the domain $[-1,1] \times [-1,1]$ (see Figure \ref{fig:18}). 

We compare the PSNR values for the recovered function and the original function 
for different steps of refinement in Table \ref{tab:9}, where the refinement step is given by 
the step-size $h$.

\begin{table}[H]
\begin{center}
\caption{PSNR for $f$ using different penalty terms}
\begin{tabular}{ |c|c|c|c|c| }
\hline
$i$ & $h$ & Grad. & Mixed & Bihar. \\
\hline
0 & 2/19 & 19.85 & 24.31 & 25.54 \\ 
1 & 1/19 & 20.01 & 24.48 & 26.44 \\ 
2 & 1/38 & 19.59 & 24.61 & 26.61 \\ 
3 & 1/76 & 19.16 & 24.67 & 26.69 \\
4 & 1/152 & 18.51 & 24.70 & 26.72 \\
5 & 1/304 & 17.95 & 24.71 & 26.73 \\
 \hline
\end{tabular}
\label{tab:9}
\end{center}
\end{table}
We can see that PSNR values  do not increase or decrease for the spline with 
the mixed derivative penalty and the spline with the biharmonic penalty, whereas 
the PSNR values decrease for the spline with the gradient penalty. This is due to the fact that 
the stability constant depends on the mesh-size $h$ for the spline with 
the gradient penalty.

%\begin{table}[H]
%\begin{center}
%\caption{$L^2$ errors and convergence rates ($f$)}
%\begin{tabular}{ |c|c|c|c|c|c|c|c| }
%\hline
%& & \multicolumn{2}{|c|}{Gradient} & \multicolumn{2}{|c|}{Mixed derivative} & \multicolumn{2}{|c|}{Biharmonic} \\
%\hline
%$i$ & $h$ & $\norm{f-f_h}_{L^2(\Omega)}$ & rate & $\norm{f-f_h}_{L^2(\Omega)}$ & rate & $\norm{f-f_h}_{L^2(\Omega)}$ & rate \\
%\hline
%0 & 2/19 & 0.1782 & - & 0.1616 & - & 0.1434 & -\\ 
%1 & 1/19 & 0.1844 & 0.9662 & 0.1597 & 1.0114 & 0.1416 & 1.0128 \\ 
%2 & 1/38 & 0.1850 & 0.9968 & 0.1604 & 0.9962 & 0.1413 & 1.0022 \\ 
%3 & 1/76 & 0.1855 & 0.9973 & 0.1605 & 0.9990 & 0.1413 & 1.0004 \\
%4 & 1/152 & 0.1855 & 0.9998 & 0.1605 & 0.9998 & 0.1412 & 1.0001 \\
%5 & 1/304 & 0.1855 & 0.9999 & 0.1606 & 0.9999 & 0.1412 & 1.0000 \\
% \hline
%\end{tabular}
%\label{tab:7}
%\end{center}
%\end{table}

We show the functions recovered after the fifth iteration in Figure \ref{fig:18}. We can see that gradient penalty spline produces a recovered function that overfits the noisy data. On the other hand, both the mixed derivative and the biharmonic splines produce smoother recovered functions.

\begin{figure}[H]
\begin{center}
\begin{tabular}{ cc }
\includegraphics[scale=0.5,trim={1.5cm 0.9cm 1.2cm 1cm},clip]{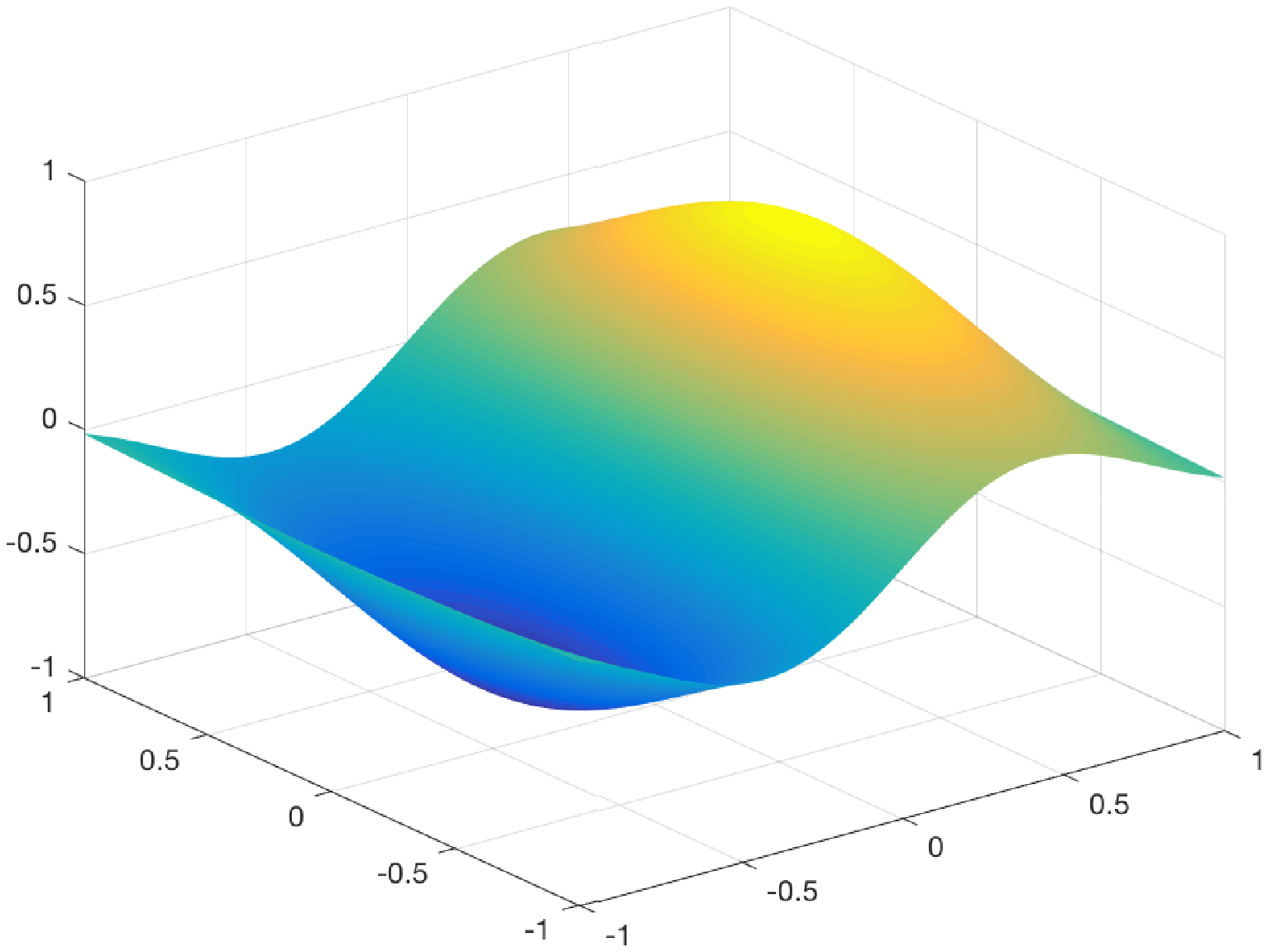}
&
\includegraphics[scale=0.5,trim={1.5cm 0.9cm 1.2cm 1cm},clip]{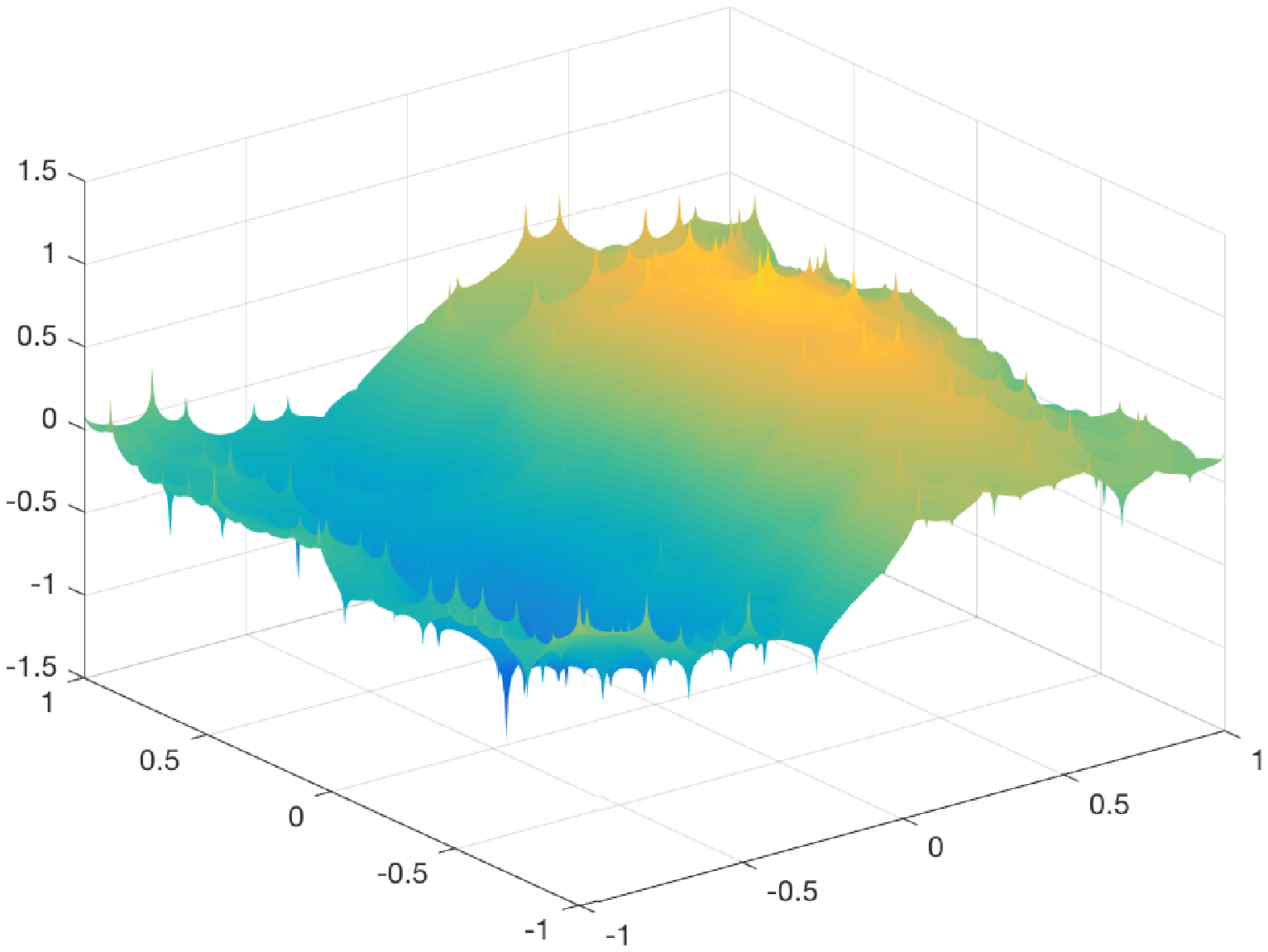}
\\\\
\includegraphics[scale=0.5,trim={1.5cm 0.9cm 1.2cm 1cm},clip]{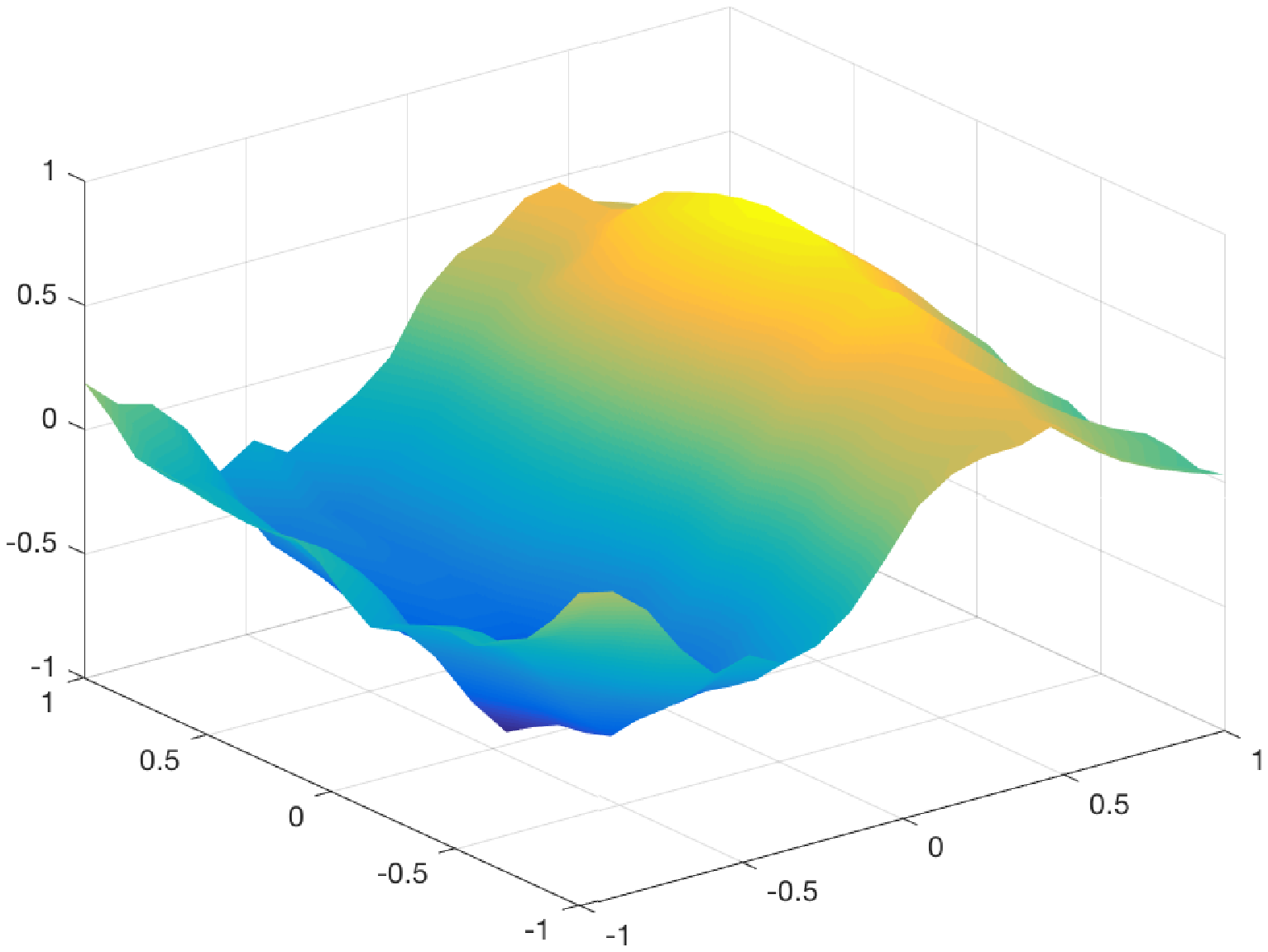}
&
\includegraphics[scale=0.5,trim={1.5cm 0.9cm 1.2cm 1cm},clip]{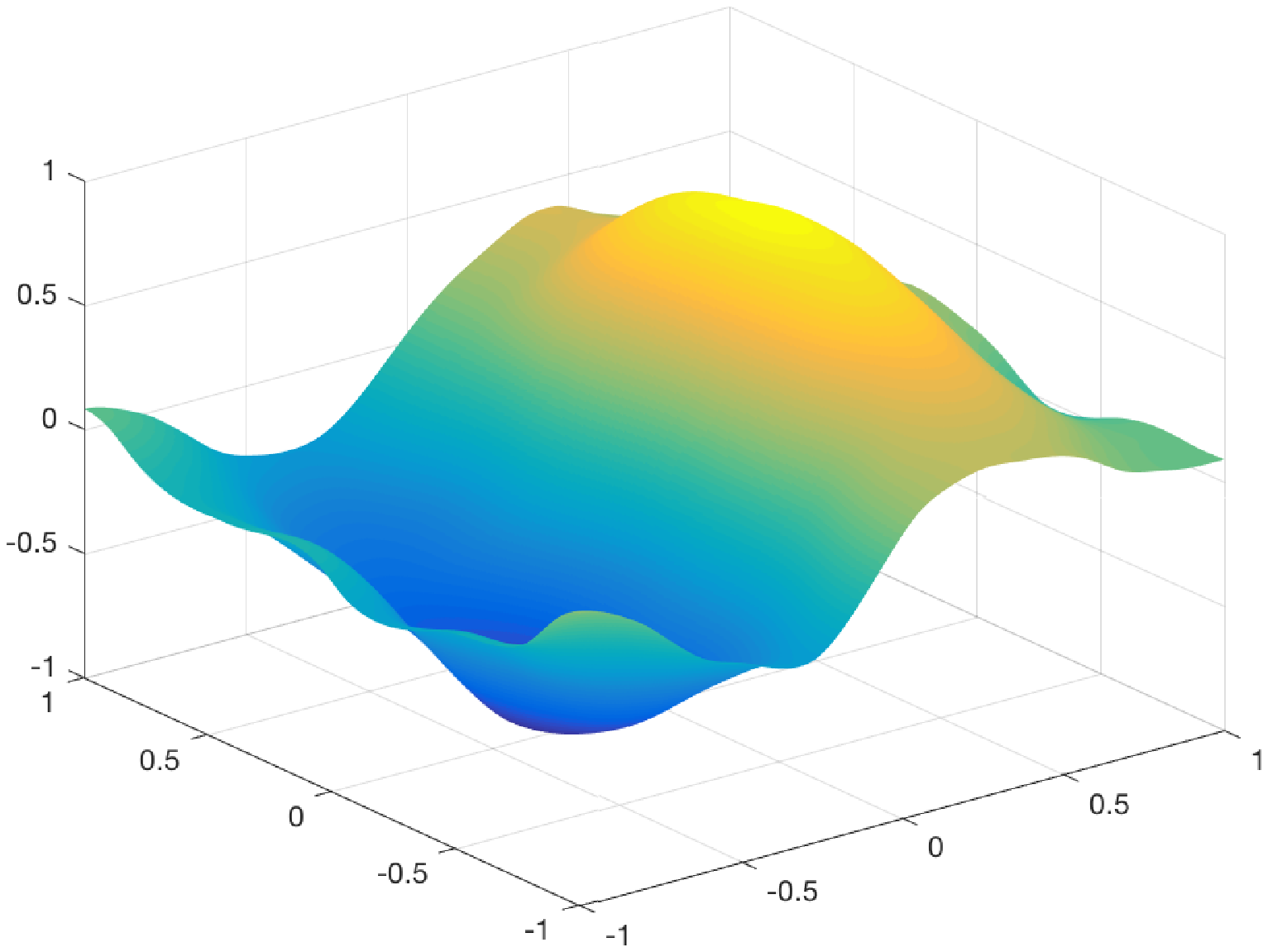}
\end{tabular}
\end{center}
\caption{$f(x,y) = \sin(3x) e^{x^2 - y^2}$ restricted to $[-1,1]\times[-1,1]$ (top left), function recovered using gradient penalty spline (top right), function recovered using mixed derivative spline (bottom left), function recovered using biharmonic spline (bottom right).}
\label{fig:18}
\end{figure}

We want to see the effect of the mesh-size on the spurious spikes of the recovered 
function. 
In Figure \ref{fig:f grad}, we show the functions recovered using the gradient penalty spline  using the coarser mesh-sizes 
$h=1/19$ and $h=1/76$. These pictures how that the spurious spikes are still present although 
the the spikes are slightly smoother in the coarser mesh results. 

\begin{figure}[H]
\begin{center}
\begin{tabular}{ cc }
\includegraphics[scale=0.5,trim={1.5cm 0.9cm 1.2cm 1cm},clip]{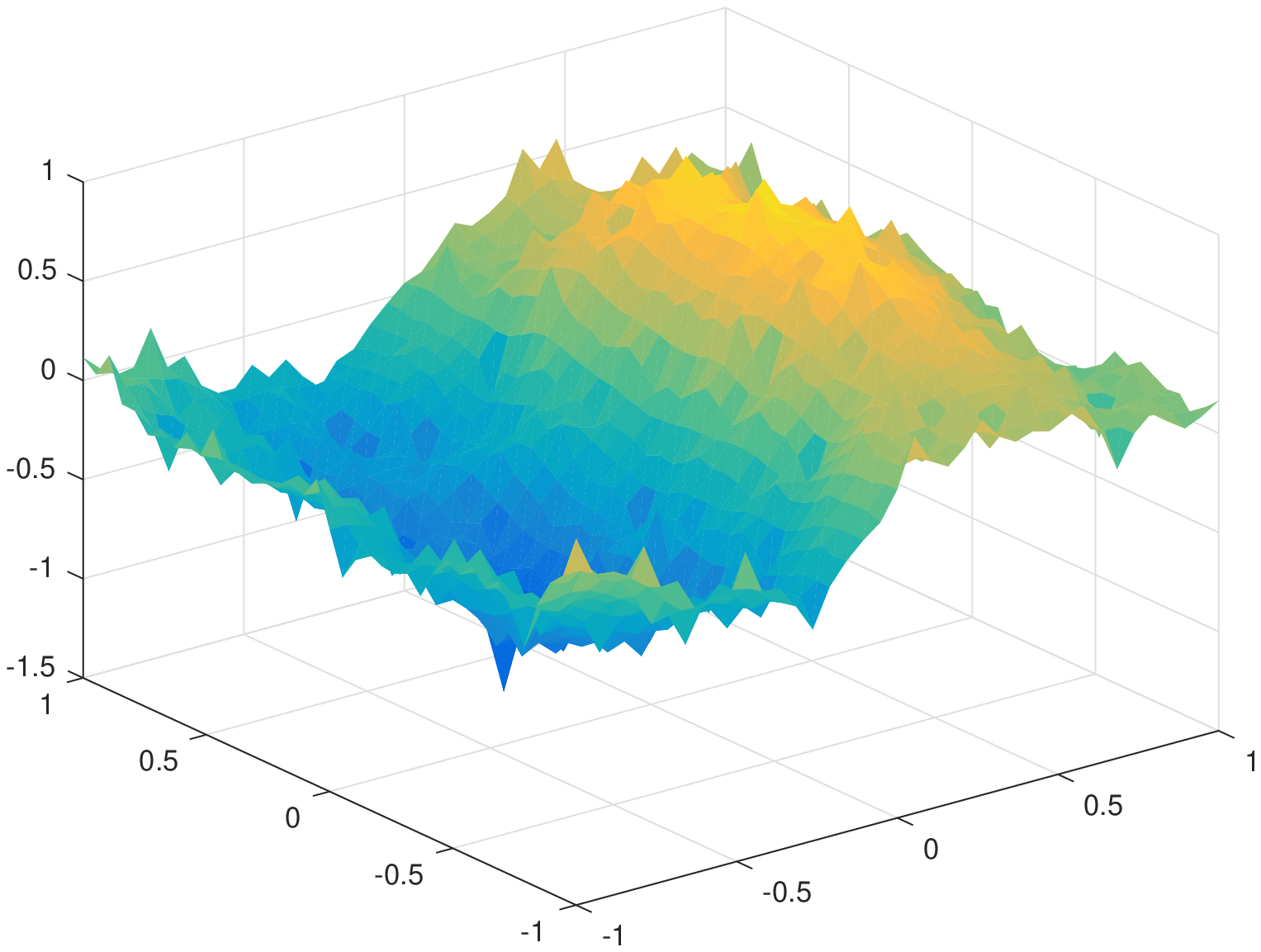}
&
\includegraphics[scale=0.5,trim={1.5cm 0.9cm 1.2cm 1cm},clip]{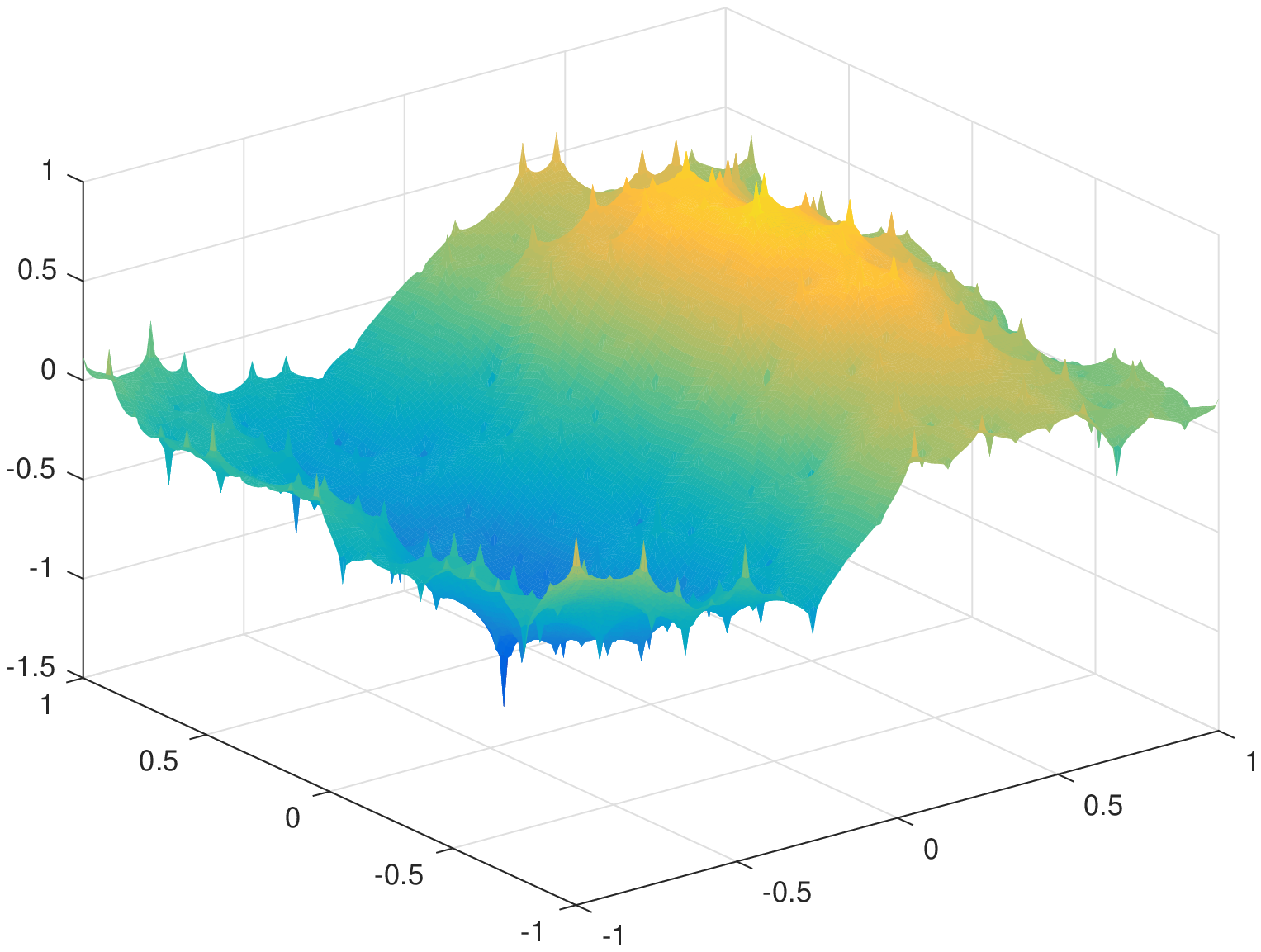}
\end{tabular}
\end{center}
\caption{Function recovered using gradient 
penalty spline with $h=1/19$ (left), function recovered using gradient penalty spline with $h=1/76$ (right)}
\label{fig:f grad}
\end{figure}

We will now provide a second test function. Let the function $g$ be given by
$g(x,y) = -x^2 -xy^2 $
over the domain $[-1,1]\times[-1,1]$ (see Figure \ref{fig:19}). 
We have tabulated the PSNR values for different splines at different levels of refinement 
in Table \ref{tab:10}. 
The results are similar to the first example but the spurious spikes of
the recovered function using the gradient penalty formulation have not affected the 
PSNR values much in this example.

\begin{table}[H]
\begin{center}
\caption{PSNR for $g$ using different penalty terms}
\begin{tabular}{ |c|c|c|c|c| }
\hline
$i$ & $h$ & Grad. & Mixed & Biharm. \\
\hline
0 & 2/19 & 19.49 & 21.24 & 22.19 \\ 
1 & 1/19 & 19.01 & 21.17 & 22.29 \\ 
2 & 1/38 & 18.76 & 21.15 & 22.33 \\ 
3 & 1/76 & 18.33 & 21.13 & 22.31 \\
4 & 1/152 & 18.16 & 21.12 & 22.30 \\
5 & 1/304 & 18.82 & 21.11 & 22.29 \\
 \hline
\end{tabular}
\label{tab:10}
\end{center}
\end{table}

%\begin{table}[H]
%\begin{center}
%\caption{$L^2$ errors and convergence rates ($g$)}
%\begin{tabular}{ |c|c|c|c|c|c|c|c| }
%\hline
%& & \multicolumn{2}{|c|}{Gradient} & \multicolumn{2}{|c|}{Mixed derivative} & \multicolumn{2}{|c|}{Biharmonic} \\
%\hline
%$i$ & $h$ & $\norm{g-g_h}_{L^2(\Omega)}$ & rate & $\norm{g-g_h}_{L^2(\Omega)}$ & rate & $\norm{g-g_h}_{L^2(\Omega)}$ & rate \\
%\hline
%0 & 2/19 & 0.1527 & - & 0.1269 & - & 0.1402 & - \\ 
%1 & 1/19 & 0.1585 & 0.9639 & 0.1314 & 0.9654 & 0.1453 & 0.9649 \\ 
%2 & 1/38 & 0.1584 & 1.0003 & 0.1326 & 0.9915 & 0.1468 & 0.9898 \\ 
%3 & 1/76 & 0.1601 & 0.9893 & 0.1328 & 0.9979 & 0.1472 & 0.9973 \\
%4 & 1/152 & 0.1593 & 1.0054 & 0.1329 & 0.9995 & 0.1473 & 0.9993\\
%5 & 1/304 & 0.1606 & 0.9915 & 0.1329 & 0.9999 & 0.1473 & 0.9998\\
% \hline
%\end{tabular}
%\label{tab:8}
%\end{center}
%\end{table}

We show the functions recovered after the fifth iteration in Figure \ref{fig:19}. Again, we see that the gradient penalty spline overfits the data.

\begin{figure}[H]
\begin{center}
\begin{tabular}{ cc }
\includegraphics[scale=0.5,trim={1.5cm 0.9cm 1.2cm 1cm},clip]{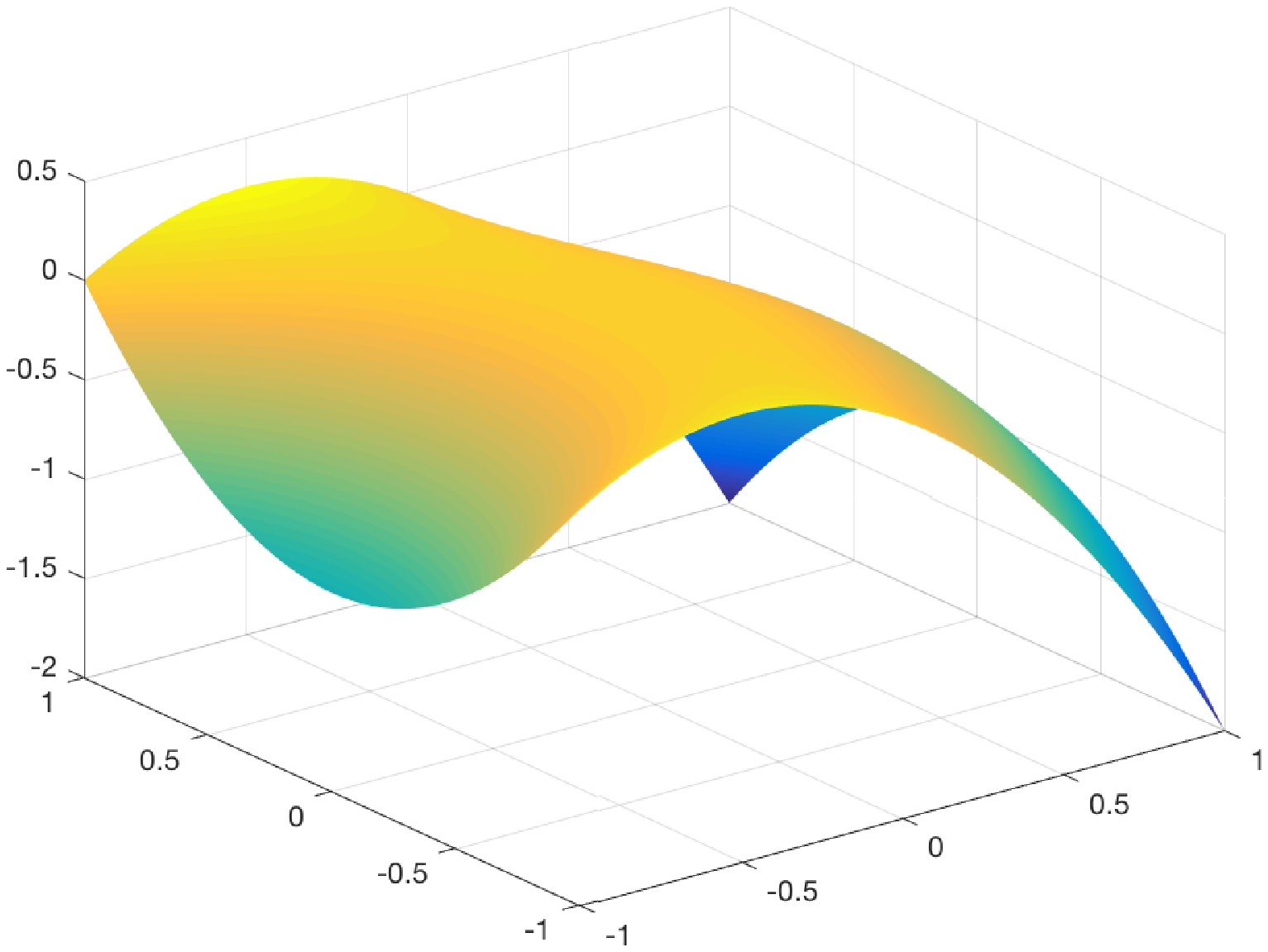}
&
\includegraphics[scale=0.5,trim={1.5cm 0.9cm 1.2cm 1cm},clip]{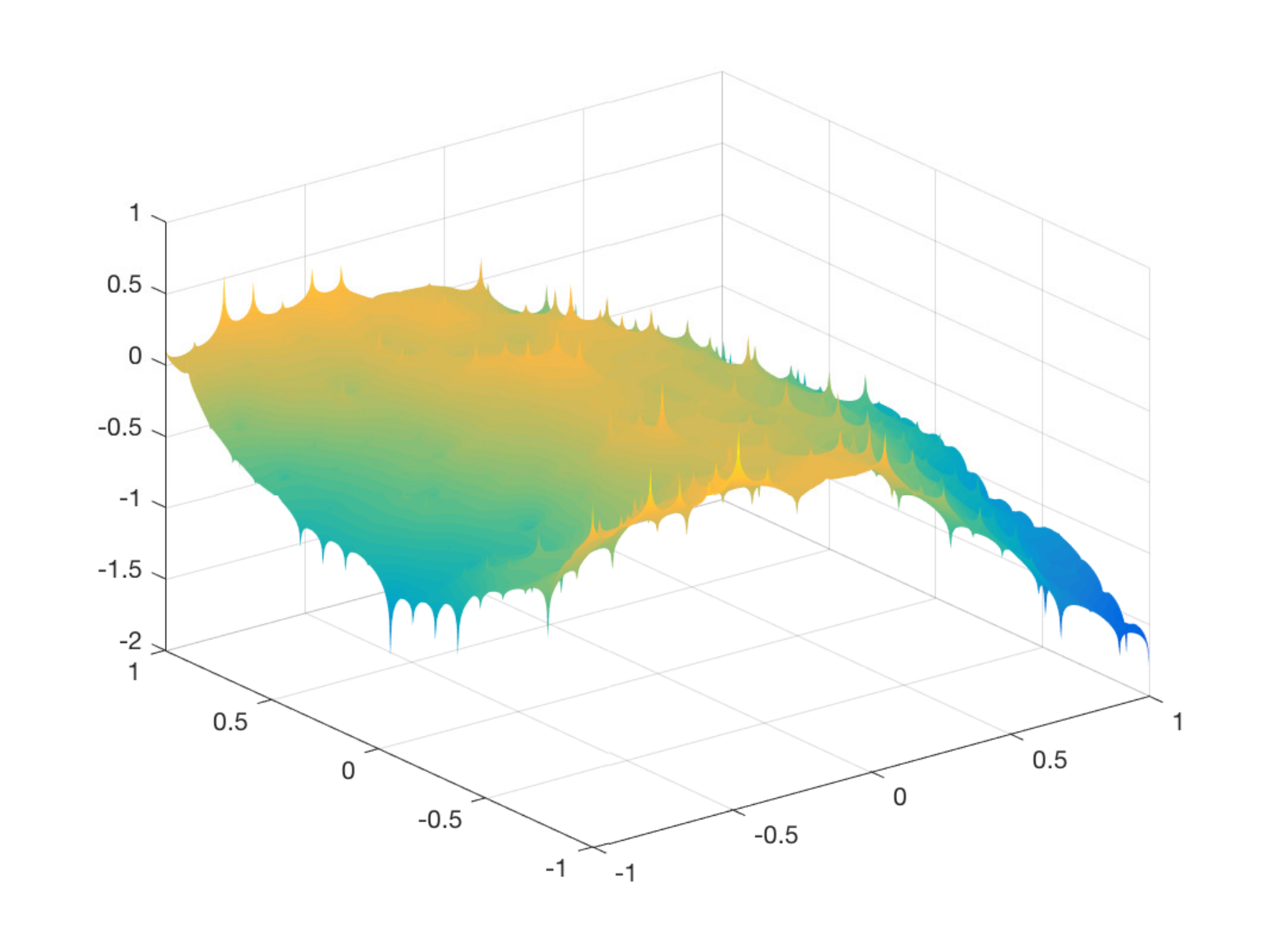}
\\\\
\includegraphics[scale=0.5,trim={1.5cm 0.9cm 1.2cm 1cm},clip]{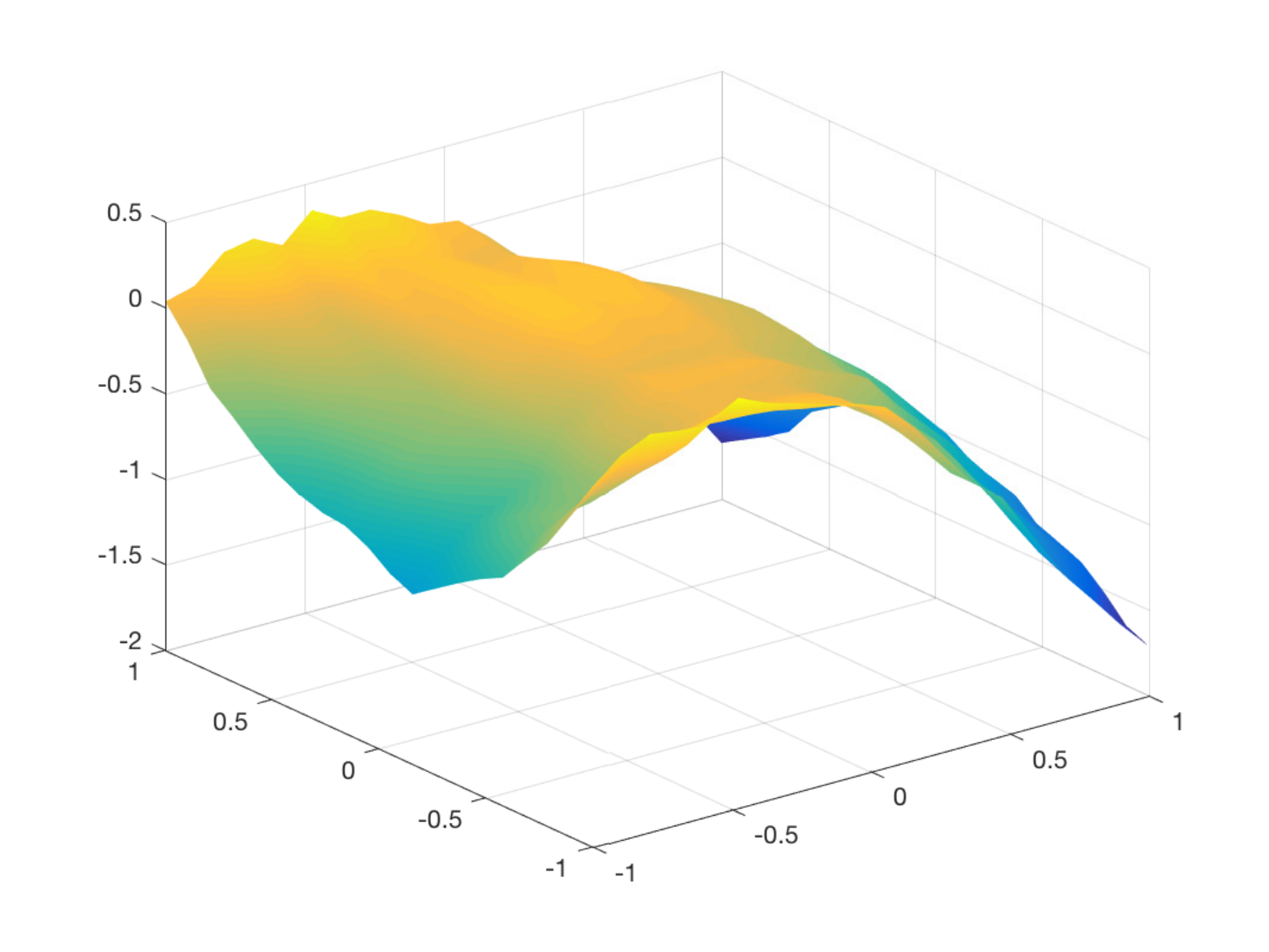}
&
\includegraphics[scale=0.5,trim={1.5cm 0.9cm 1.2cm 1cm},clip]{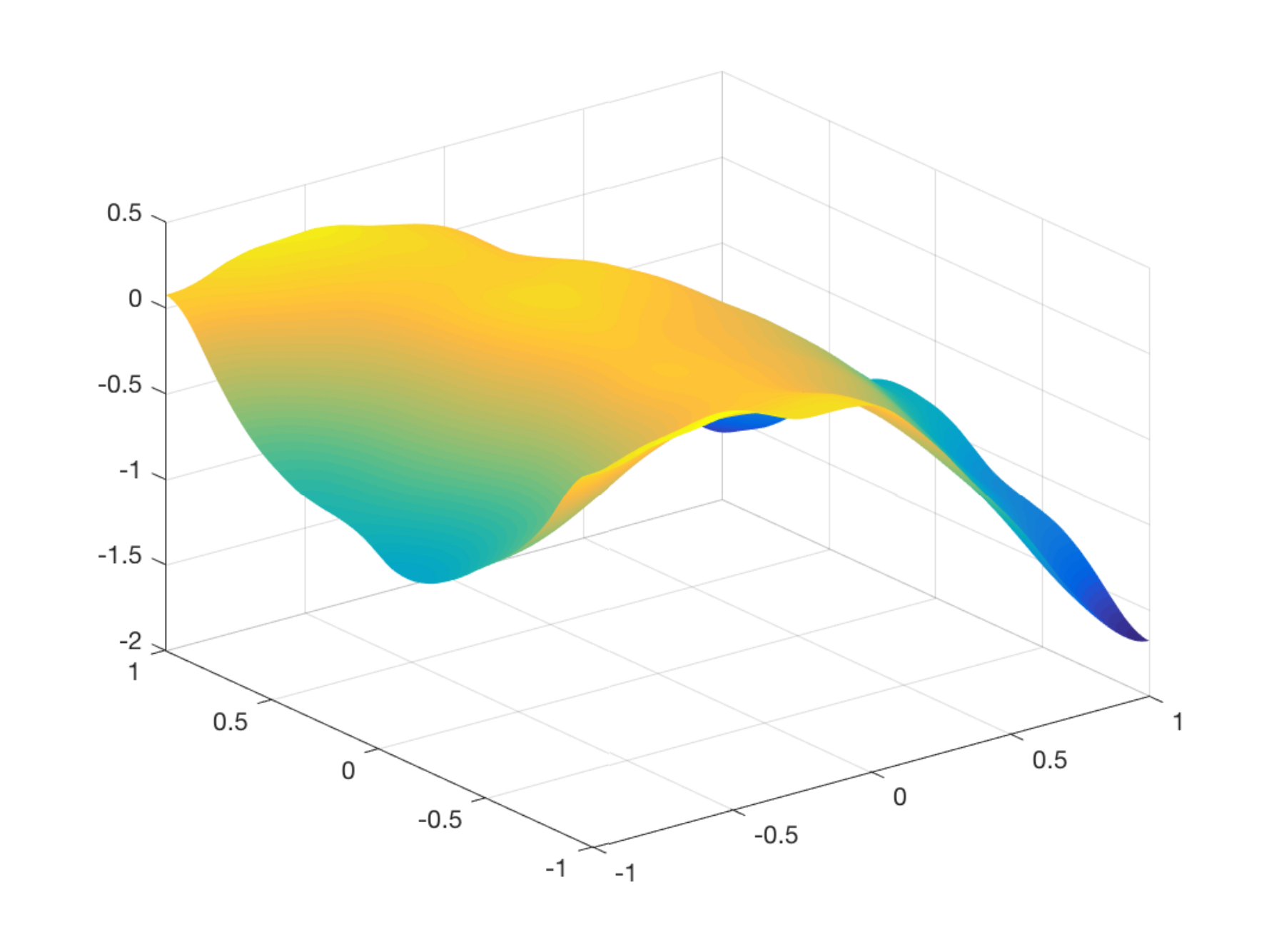}
\end{tabular}
\end{center}
\caption{$g(x,y) = -x^2 - xy^2$ restricted to $[-1,1]\times[-1,1]$ (top left), function recovered using gradient penalty spline  (top right), function recovered using mixed derivative spline (bottom left), function recovered using biharmonic spline  (bottom right).}
\label{fig:19}
\end{figure}
\section{Discussion}

We compared three different bivariate L-spline approaches for removing the mixture of Gaussian and impulsive noise from images. We found that for the Lena image, the biharmonic penalty produced recovered images with the best PSNR. However, we found that the biharmonic penalty performed the worst when recovering the Baboon image. The gradient and mixed derivative penalties performed very similarly to each other when recovering the two real life images. For the Binary image, we found that the mixed derivative penalty performed the best, followed by the biharmonic penalty and then the gradient penalty.

%We then applied the same approaches to recover two continuous functions from a set of noisy observations. We found that for the function $f(x,y) = \sin(3x) e^{x^2 - y^2}$, the biharmonic penalty produced the best recovered functions, followed by the mixed derivative penalty. For the function $g(x,y) = -x^2 -xy^2$, we found that the mixed derivative penalty produced the best recovered functions, followed by the biharmonic penalty. For both functions, the gradient penalty produced recovered functions that overfitted the data.

We then applied the same approaches to recover two continuous functions from a set of noisy observations. We found that for both functions, the biharmonic penalty produced the best recovered functions, closely followed by the mixed derivative penalty. The gradient penalty produced recovered functions that overfitted the data.

The overfitting occurred because the gradient penalty formulation is not well-posed in the continuous setting. For dimensions $d \geq 2$, we have that $H^1(\Omega) \not\subset C^0(\Omega)$ (by the Sobolev embedding theorem). This ill-posedness exhibits itself when the mesh size goes to zero \cite{Utr88}. The other formulations, however, are well-posed in the continuous setting. This is because $H_m^1(\Omega) \subset C^0(\Omega)$ for any dimension $d$ \cite{ST87}, and $H^2(\Omega) \subset C^0(\Omega)$ for dimensions $d \leq 3$. 

Overall, the gradient penalty was the simplest spline to implement and the most computationally efficient. However, 
as this is not well-posed for $d>1$, it often produces spurious results. 
The computational cost of  the spline with the mixed derivative penalty is very close to the gradient penalty
and this is well-posed for all dimensions \cite{Lam14}. 
The biharmonic penalty was the least simple to implement, and was the least computationally efficient. 
Moreover, the spline with the biharmonic penalty is also not well-posed when $d>3$. 
Therefore, we find that the spline with the mixed derivative penalty is the best choice among the presented three splines.

\bibliography{signal}

\begin{thebibliography}{10}

\bibitem{Ada75}
R.A. Adams.
\newblock {\em Sobolev Spaces}.
\newblock Academic Press New York, 1975.

\bibitem{Bes04}
E.~Besdok.
\newblock Impulsive noise suppression from images by using anfis interpolant
  and lillietest.
\newblock {\em EURASIP Journal on Applied Signal Processing}, 2004:2423--2433,
  2004.

\bibitem{BS94}
S.C. Brenner and L.R. Scott.
\newblock {\em The Mathematical Theory of Finite Element Methods}.
\newblock Springer--Verlag, New York, 1994.

\bibitem{Cia78}
P.G. Ciarlet.
\newblock {\em The finite element method for elliptic problems}.
\newblock North Holland, Amsterdam, 1978.

\bibitem{DI06}
L.~Demaret and A.~Iske.
\newblock Adaptive image approximation by linear splines over locally optimal
  {D}elaunay triangulations.
\newblock {\em IEEE Signal Processing Letters}, 13:281--284, 2006.

\bibitem{FNM01}
M.~Ferrant, A.~Nabavi, B.~Macq, F.A. Jolesz, R.~Kikinis, and S.K. Warfield.
\newblock Registration of 3-d intraoperative mr images of the brain using a
  finite-element biomechanical model.
\newblock {\em IEEE Transactions on Medical Imaging}, 20:1384 -- 1397, 2001.

\bibitem{GH09}
J.~Garcke and M.~Hegland.
\newblock Fitting multidimensional data using gradient penalties and the sparse
  grid combination technique.
\newblock {\em M. Computing}, 84:1--25, 2009.

\bibitem{Hut89}
M.~Hutchinson.
\newblock A stochastic estimator of the trace of the influence matrix for
  laplacian smoothing splines.
\newblock {\em Commun. Statis. Simul. Comput.}, 18:1059--1076, 1989.

\bibitem{Lam09}
B.~Lamichhane.
\newblock Finite element techniques for removing the mixture of gaussian and
  impulsive noise.
\newblock {\em IEEE Transactions on Signal Processing}, 57:2538--2547, 2009.

\bibitem{LRH14}
B.~Lamichhane, S.~Roberts, and M.~Hegland.
\newblock A new multivariate spline based on mixed partial derivatives and its
  finite element approximation.
\newblock {\em Applied Mathematics Letters}, 35:82--85, 2014.

\bibitem{Lam14}
Bishnu~P. Lamichhane.
\newblock Removing a mixture of gaussian and impulsive noise using the total
  variation functional and split bregman iterative method.
\newblock In Jason Sharples and Judith Bunder, editors, {\em Proceedings of the
  17th Biennial Computational Techniques and Applications Conference,
  CTAC-2014}, volume~56 of {\em ANZIAM J.}, pages C52--C67, October 2015.
\newblock \url
  {http://journal.austms.org.au/ojs/index.php/ANZIAMJ/article/view/9316}
  [October 27, 2015].

\bibitem{PR00}
T.~Preusser and M.~Rumpf.
\newblock An adaptive finite element method for large scale image processing.
\newblock {\em Journal of Visual Communication and Image Representation},
  11:183--195, 2000.

\bibitem{QV94}
A.~Quarteroni and A.~Valli.
\newblock {\em Numerical approximation of partial differential equations}.
\newblock Springer--Verlag, Berlin, 1994.

\bibitem{ST87}
H.-J. Schmeisser and H.~Triebel.
\newblock {\em Topics in Fourier analysis and function spaces}.
\newblock Wiley, first edition, 1987.

\bibitem{Utr88}
F.~Utreras.
\newblock Convergence rates for multivariate smoothing spline functions.
\newblock {\em Journal of Approximation Theory}, 52:1--27, 1988.

\bibitem{Wah90}
G.~Wahba.
\newblock {\em Spline Models for Observational Data}, volume~59 of {\em Series
  in Applied Mathematics}.
\newblock SIAM, Philadelphia, first edition, 1990.

\bibitem{WQZ06}
Z.~Wang, F.~Qi, and F.~Zhou.
\newblock A discontinuous finite element method for image denoising.
\newblock In {\em Image Analysis and Recognition}, volume 4141 of {\em Lecture
  Notes in Computer Science}, pages 116--125. Springer Berlin/Heidelberg, 2006.

\end{thebibliography}
\bibliographystyle{plain}
\end{document}